\newtheorem{theorem}{Theorem}[section]
\newtheorem{lemma}[theorem]{Lemma}
\newtheorem{proposition}[theorem]{Proposition}
\newtheorem{observation}[theorem]{Observation}
\newcommand{\gp}{\mathop{\mathrm{gp}}}
\newcommand{\mut}{\mathop{\mu_\mathrm{t}}}
\newcommand{\muo}{\mathop{\mu_\mathrm{o}}}
\newcommand{\diam}{\mathop{\mathrm{diam}}}
\title{Mutual-visibility and general position in double graphs and in Mycielskians}
\author{
Dhanya Roy $^{a}$\footnote{\tt dhanyaroyku@gmail.com, dhanyaroyku@cusat.ac.in} 
\and Sandi Klav\v{z}ar $^{b,c,d}$\footnote{\tt sandi.klavzar@fmf.uni-lj.si} 
\and Aparna Lakshmanan S $^{a}$\footnote{\tt aparnaren@gmail.com, aparnals@cusat.ac.in}\\\\
$^{a}$ \small Department of Mathematics, 	Cochin University of Science and Technology, 
\\ \small Cochin - 22, Kerala, India\\
$^{b}$\small Faculty of Mathematics and Physics, University of Ljubljana, Slovenia\\
$^{c}$ \small Institute of Mathematics, Physics and Mechanics, Ljubljana, Slovenia \\
$^{d}$ \small Faculty of Natural Sciences and Mathematics, University of Maribor, Slovenia\\
}
\date{\today}
\begin{document}
\maketitle

\begin{abstract}
The general position problem in graphs is to find the maximum number of vertices that can be selected such that no three vertices lie on a common shortest path. The mutual-visibility problem in graphs is to find the maximum number of vertices that can be selected such that every pair of vertices in the collection has a shortest path between them with no vertex from the collection as an internal vertex. In this paper, the general position problem and the mutual-visibility problem is investigated in double graphs and in Mycielskian graphs. Sharp general bounds are proved, in particular involving the total mutual-visibility number and the outer mutual-visibility number of base graphs. Several exact values are also determined, in particular the mutual-visibility number of the double graphs and of the Mycielskian of cycles.
\end{abstract}

\noindent
{\bf Keywords}: general position, mutual-visibility, double graph, Mycielskian graph, outer mutual-visibility, total mutual-visibility 

\medskip\noindent
{\bf AMS Subj.\ Class.\ (2020)}: 05C12, 05C69, 05C76

\section{Introduction}
	
The graph general position problem reflects the Dudeney's no-three-in-line problem~\cite{dudeney-1917} as well as the general position subset selection problem from discrete geometry~\cite{froese-2017}. The problem was in a different context investigated on hypercubes~\cite{korner-1995}, while it was introduced in its generality as follows~\cite{manuel-2018}. A set $S$ of vertices in a graph is a {\em general position set} if no three vertices from $S$ lie on a common shortest path. A largest general position set of a graph $G$ is called a {\em gp-set} of $G$ and its size is the {\em general position number} $\gp(G)$ of $G$. The same concept was in use two years earlier in~\cite{chandran-2016} under the name geodetic irredundant sets, where it was defined in a different way.
	
In discrete geometry, a shortest path between two points is unique while in graphs there can be more than one shortest path between two vertices. This fact, as well as the computational concept of visibility between robots bring the mutual-visibility problem in graphs into picture. This problem was introduced by Di Stefano~\cite{distefano-2022} as follows. Given a set $S$ of vertices in a graph $G$, two vertices $u$ and $v$ are {\em mutually-visible} or, more precisely, {\em $S$-visible}, if there exists a shortest $u,v$-path in $G$ which contains no further vertices from $S$. The set $S$ is a {\em mutual-visibility set} if its vertices are pairwise mutually-visible. A largest mutual-visibility set is called a {\em $\mu$-set} and its size is called the {\em mutual-visibility number} $\mu(G)$ of $G$.

In~\cite{cicerone-2023a}, a variety of mutual-visibility sets was introduced, we will use the following two variants. A set $S$ is an {\em outer mutual-visibility set} in $G$ if $S$ is a mutual-visibility set and every pair of vertices $u \in S$, $v \in V(G)\setminus S$ are also $S$-visible. A largest outer mutual-visibility set is called  a {\em $\muo$-set}. The size of a largest outer mutual-visibility set is called the {\em outer mutual-visibility number} of $G$, denoted as $\muo(G)$. The set $S$ is a {\em total mutual-visibility set} in $G$ if every pair of vertices in $G$ are $S$-visible. A largest total mutual-visibility set is called  a {\em $\mut$-set}. The size of a largest total mutual-visibility set is called the {\em total mutual-visibility number} of $G$, denoted as $\mut(G)$.	
	
The general position problem and the mutual-visibility problem are well studied for different graph classes like diameter two graphs~\cite{anand-2019, cicerone-2024+}, cographs~\cite{anand-2019, distefano-2022}, Kneser graphs~\cite{ghorbani-2021}, and line graphs of complete graphs~\cite{cicerone-2024+, ghorbani-2021}. Both problems were also investigated a lot on graph operations like the join of graphs~\cite{distefano-2022, ghorbani-2021}, corona products~\cite{cicerone-2023b, ghorbani-2021, klavzar-2019}, Cartesian products~\cite{cicerone-2023b, klavzar-2021, klavzar-2021b, tian-2021a, tian-2021b}, and strong products~\cite{cicerone-2023+, klavzar-2019}. In this paper we extend this line of research by investigating the problems on double graphs and on Mycielskian graphs which are respectively defined as follows.

Let $G$ be a graph. The {\em double graph} $D(G)$ of $G$ is constructed from the disjoint union of $G$ and an isomorphic copy $G'$ of $G$, where $V(G') = \{u':\ u\in V(G)\}$, by joining $u\in V(G)$ to all neighbors of $u'\in V(G')$, and joining $u'\in V(G')$ to all neighbors of $u\in V(G)$. That is, $V(D(G)) = V(G) \cup V(G')$ and for each pair $u\in V(G)$ and $u'\in V(G')$ we have $N_{D(G)}(u) = N_{D(G)}(u')$, where $N_H(u)$ denotes the open neighborhood of the vertex $u$ in a graph $H$. The {\em Mycielskian graph} $M(G)$ of $G$ has the vertex set $V(M(G)) = V(G) \cup V(G') \cup \{v^*\}$, where $V(G') = \{u':\ u\in V(G)\}$ and the edge set  $E(M(G)) = E(G) \cup \{uv':\ uv \in E(G)\} \cup \{v'v^*:\ v' \in V(G')\}$.  These two graph operators were respectively introduced in~\cite{munarini-2008, mycielski-1955}. The Mycielskian has been studied in a couple of hundred papers and the trend is still continuing~\cite{bidine-2023, boutin-2024, james-2023, kalarkop-2024}. The double graphs have also received quite some attention, cf.~\cite{kozorenko-2023, lou-2022}. 

The paper is organized as follows. In the next section additional definitions required for this paper are listed, known results recalled, and some new observations stated. In Section~\ref{sec:mutual-visibility-double} we prove that if $G$ is not complete, then $\mu(D(G))\ge n(G) + \mut(G)$. The bound is sharp as in particular follows from the proved formula $\mu(D(C_n)) = n$, $n\ge 7$. On the other hand, we construct graphs $G$ for which the difference $\mu(D(G)) - (n(G) - \mut(G))$ is arbitrary large. In Section~\ref{sec:gp-double} we prove that  $\gp(G) \leq \gp(D(G)) \leq 2\gp(G)$ and that the bounds are sharp. In the subsequent section we consider mutual-visibility in Mycielskian graph. In the main theorems we prove that  $\mu(M(P_n)) = n +  \lfloor \frac{n+1}{4}\rfloor$ for $n\ge 5$, and that $\mu(M(C_n)) =  n + \lfloor \frac{n}{4} \rfloor$ for $n\ge 8$. We also give bounds for $\mu(M(G))$, where $\diam(G) \leq 3$, in terms of $\muo(G)$ and $\mu(G)$.
	
\section{Preliminaries}
	
If $G$ is a connected graph, $S\subseteq V(G)$, and ${\cal P} = \{S_1, \ldots, S_t\}$ a partition of $S$, then ${\cal P}$ is \emph{distance-constant} if for any $i,j\in [t]$, $i\ne j$, the distance $d_G(x,y)$, where $x\in S_i$ and $y\in S_j$, is independent of the selection of $x$ and $y$. This distance is then also the distance $d_G(S_i,S_j)$ between $S_i$ and $S_j$. A distance-constant partition ${\cal P}$ is {\em in-transitive} if $d_G(S_i, S_k) \ne d_G(S_i, S_j) + d_G(S_j,S_k)$ holds for $i,j,k\in [p]$. The following characterization of general position sets will be used either implicitly or explicitly in the rest of the paper. By $G[S]$ we denote the subgraph of $G$ induced by $S\subseteq V(G)$. 

\begin{theorem} {\rm \cite[Theorem 3.1]{anand-2019}}
\label{thm:gpsets}
Let $G$ be a connected graph. Then $S\subseteq V(G)$ is a general position set if and only if the components of $G[S]$ are complete subgraphs, the vertices of which form an in-transitive, distance-constant partition of $S$. 
\end{theorem}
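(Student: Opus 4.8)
The plan is to prove both implications by reducing everything to the single defining condition of general position: for no three distinct $a,b,c \in S$ does the relation $d_G(a,c) = d_G(a,b) + d_G(b,c)$ hold, i.e.\ no vertex of $S$ lies strictly between two others on a shortest path.

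For the forward direction, assume $S$ is a general position set and establish the three asserted properties in turn. First I would show each component $C$ of $G[S]$ is a clique. Supposing not, pick a non-adjacent pair $x,z \in C$ minimizing their distance in $G[S]$; a shortest $G[S]$-path between them forces this minimal distance to be exactly $2$, since on such a path the endpoints $v_0,v_2$ are non-adjacent in $G[S]$, hence in $G$, and would give a closer non-adjacent pair if the distance exceeded $2$. The common $G[S]$-neighbor $y$ of $x,z$ then satisfies $d_G(x,y) = d_G(y,z) = 1$ and $d_G(x,z) = 2$, placing $x,y,z$ on a common shortest path and contradicting general position. Next, for distance-constancy, I would take components $C_i \neq C_j$ and show $d_G(x,y)$ is independent of $x \in C_i$: if $x,x' \in C_i$ gave distinct distances to some $y \in C_j$, then since $xx' \in E(G)$ these distances differ by exactly $1$, so the nearer vertex lies on a shortest path from the farther one to $y$, again contradicting general position; symmetry in the second coordinate upgrades this to full constancy over all pairs. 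Finally, in-transitivity follows because $d_G(S_i,S_k) = d_G(S_i,S_j) + d_G(S_j,S_k)$ would place any chosen $y \in S_j$ on a shortest path between points of $S_i$ and $S_k$.

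For the converse, assume the structural conditions and show no three $a,b,c \in S$ can satisfy the additive distance relation with $b$ as midpoint, by a case analysis on component memberships. When all three share one component, or exactly two do, the clique condition forces distances that violate the relation (contradictions of the form $1 = 1 + d \ge 2$ and similar); when $a,b,c$ lie in three distinct components, the additive relation becomes precisely the forbidden in-transitivity equation. Distance-constancy is what guarantees the relevant distances are well defined in each case.

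The main obstacle I anticipate is the clique step: one must argue carefully that a minimal non-adjacent pair within a component sits at $G[S]$-distance exactly $2$, which rests on the observation that vertices two apart on a shortest path of the induced subgraph are necessarily non-adjacent in $G$. The distance-constancy step likewise needs the two-coordinate symmetrization to be spelled out rather than asserted, whereas the converse is routine once the cases are organized by component membership.
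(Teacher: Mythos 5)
Your proof is correct and complete. Note that the paper itself offers no proof of this statement: it is imported verbatim as Theorem~3.1 of the cited reference (Anand, Chandran, Changat, Klav\v{z}ar, Thomas, \emph{Appl.\ Math.\ Comput.}\ 359 (2019) 84--89), so there is nothing in the paper to compare against. Your argument is the natural one for this characterization: reducing general position to the non-existence of a metric betweenness relation $d_G(a,c) = d_G(a,b) + d_G(b,c)$ among triples of $S$, then (forward direction) extracting an induced path of length two to force each component of $G[S]$ to be a clique, using adjacency within a clique plus the triangle inequality to get distance-constancy one coordinate at a time, and reading in-transitivity directly off the betweenness relation; the converse case analysis by component membership is exactly the right organization, with distance-constancy making the quantities $d_G(S_i,S_j)$ well defined and in-transitivity ruling out the only nontrivial case. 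All steps you flag as delicate (non-adjacency of vertices two apart on a shortest induced path, and the two-coordinate symmetrization) are handled correctly.
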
	
	
Let $G$ be a graph. Vertices $x$ and $y$ of $G$ are {\em false twins} if $N_G(x) = N_G(y)$. (Note that false twins are not adjacent.) Further, $x$ and $y$ are {\em true twins} if $N_G[x] = N_G[y]$, where $N_G[x]$ denotes the closed neighborhood of the vertex $x$ in $G$. In~\cite{klavzar-2019}, relations between true twins, the general position number, and strong resolving graphs were investigated. The following easy but useful general properties of twins hold. 

\begin{lemma}
\label{lem:false-twins}
Let $G$ be a graph and $u,v\in V(G)$. 

(i) If $u$, $v$ are false twins, and $S$ is a general position (resp.\ mutual-visibility) set of $G$ such that $S\cap \{u,v\} = \{u\}$, then $(S \setminus \{u\}) \cup \{v\}$ is also a general position (resp.\ mutual-visibility) set of $G$. 

(ii) If $u$, $v$ are true twins, and $S$ is a general position set of $G$ such that $u \in S$, then $S \cup \{v\}$ is also a general position set of $G$. 
\end{lemma}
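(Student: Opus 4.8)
The plan is to treat the two parts separately, exploiting the symmetry that twins induce.

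For part (i), I would first observe that when $u$ and $v$ are false twins the transposition $\sigma$ that exchanges $u$ and $v$ and fixes every other vertex is an automorphism of $G$. Indeed, since $u$ and $v$ are non-adjacent and $N_G(u)=N_G(v)$, a pair $\{x,y\}$ is an edge if and only if $\{\sigma(x),\sigma(y)\}$ is an edge: the only cases to check are those involving $u$ or $v$, and these reduce to $N_G(u)=N_G(v)$, while the potential edge $uv$ does not occur. Automorphisms preserve distances, hence shortest paths, hence both the defining condition of general position sets and that of mutual-visibility sets (a shortest $a,b$-path internally avoiding $S$ is carried to a shortest $\sigma(a),\sigma(b)$-path internally avoiding $\sigma(S)$). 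Since $S\cap\{u,v\}=\{u\}$ gives $u\in S$ and $v\notin S$, we have $\sigma(S)=(S\setminus\{u\})\cup\{v\}$, and the image of a general position (resp.\ mutual-visibility) set under an automorphism is again such a set. This disposes of (i) uniformly for both notions.

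For part (ii), the key preliminary fact is the distance identity for true twins: $d_G(u,v)=1$ and $d_G(u,x)=d_G(v,x)$ for every $x\in V(G)\setminus\{u,v\}$; the equality of distances holds because $v$ can replace $u$ as the penultimate vertex on any shortest $u,x$-path and conversely, using $N_G[u]=N_G[v]$. I would then invoke the characterization in Theorem~\ref{thm:gpsets}. Let $C$ be the component of $G[S]$ containing $u$; it is a complete subgraph, and because the neighbors of $v$ inside $S$ coincide with those of $u$ inside $S$ (all lying in $C$), the vertex $v$ is adjacent in $G[S\cup\{v\}]$ precisely to the vertices of $C$. Hence $C\cup\{v\}$ is again a complete component and all other components are unchanged. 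It then remains to verify that the induced partition of $S\cup\{v\}$ stays in-transitive and distance-constant, and this is exactly where the distance identity does the work: since $v$ has the same distances as $u$ to every vertex outside $\{u,v\}$, every inter-part distance is unaffected by enlarging the part corresponding to $C$, so both conditions are inherited verbatim from $S$. Theorem~\ref{thm:gpsets} then yields that $S\cup\{v\}$ is a general position set.

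I expect the only genuinely delicate points to be the verification of the distance identity $d_G(u,x)=d_G(v,x)$ and the bookkeeping that all of $v$'s neighbors inside $S$ lie in the single component $C$; once these are in place, the automorphism argument for (i) and the component-and-partition argument for (ii) are routine. It is worth remarking why (ii) is stated only for general position and has no mutual-visibility analogue: inserting $v$ can create a new internal vertex that blocks the $S$-visibility of some previously visible pair, so the clean symmetry available for the "swap a twin" operation of (i) is not available for the "add a twin" operation.
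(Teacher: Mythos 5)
Your proof is correct. For part (ii) you take essentially the paper's route: both proofs invoke Theorem~\ref{thm:gpsets}, take the complete component $Q$ (your $C$) of $G[S]$ containing $u$, note that $Q\cup\{v\}$ is again complete, and use $d_G(u,x)=d_G(v,x)$ for $x\notin\{u,v\}$ to conclude that the distance-constant, in-transitive structure of the partition is unaffected; you merely spell out the bookkeeping the paper leaves implicit (and, like the paper, you should first dispose of the trivial case $v\in S$). For part (i), however, your route is genuinely different and buys something. The paper proves the general position claim via the distance equality together with Theorem~\ref{thm:gpsets}, and for mutual-visibility it simply asserts that two vertices are $S$-visible if and only if they are $\bigl((S\setminus\{u\})\cup\{v\}\bigr)$-visible --- a claim whose verification actually requires rerouting shortest paths that pass through $v$ (replacing $v$ by $u$, which is legal precisely because $u\notin (S\setminus\{u\})\cup\{v\}$). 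Your observation that the transposition $\sigma$ exchanging two false twins is an automorphism of $G$ subsumes both statements at once: automorphisms carry shortest paths to shortest paths, hence images of general position sets and of mutual-visibility sets are again such sets, and $\sigma(S)=(S\setminus\{u\})\cup\{v\}$ exactly because $S\cap\{u,v\}=\{u\}$. This is cleaner and fills in the detail the paper glosses over. Two cosmetic points: in (ii) the neighbors of $v$ inside $S$ are $(N_G(u)\cap S)\cup\{u\}$, not literally the same set as the neighbors of $u$ inside $S$, though your conclusion that $v$ is joined in $G[S\cup\{v\}]$ precisely to the vertices of $C$ stands; and in your distance identity the word ``penultimate'' should be ``initial'' --- one replaces the endpoint $u$ of a shortest $u,x$-path by $v$.
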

	
\begin{proof}
(i) Since $d_G(u,x) = d_G(v,x)$ for each $x\in V(G)\setminus \{u,v\}$, Theorem~\ref{thm:gpsets} yields that $(S \setminus \{u\}) \cup \{v\}$ is a general position set of $G$. Moreover, two vertices are $S$-visible if and only they are $(S \setminus \{u\}) \cup \{v\}$-visible, hence $(S \setminus \{u\}) \cup \{v\}$ is a mutual-visibility set provided that $S$ is a mutual-visibility set. 

(ii) Using the fact that $d_G(u,x) = d_G(v,x)$ for each $x\in V(G)\setminus \{u,v\}$, Theorem~\ref{thm:gpsets} again can be used to deduce that $S \cup \{v\}$ is a general position set. Indeed, if $v\in S$, there is nothing to prove. Let now $v\notin S$ and let $Q$ be the complete subgraph cotaining $u$ from the partition of $S$ corresponding to Theorem~\ref{thm:gpsets}. Then $Q \cup \{v\}$ is also complete, therefore Theorem~\ref{thm:gpsets} implies that $S \cup \{v\}$ is a general position. 
\end{proof}

Note that Lemma~\ref{lem:false-twins}(ii) does not hold if general position sets are replaced by mutual-visibility sets. For example, consider the complete graph $K_4$ minus an edge with the vertex set $\{a,b,c,d\}$, where $a$ and $b$ are the non-adjacent pair. Then $S = \{a,b,c\}$ is a mutual-visibility set, $c$ and $d$ are true twins, but we cannot add $d$ to $S$ without affecting the mutual-visibility.

\section{Mutual-visibility in double graphs}
\label{sec:mutual-visibility-double}

In this section we consider mutual-visibility in double graphs. For this task recall that if $G$ is a graph, then $V(D(G)) = V(G) \cup V(G')$ and that for each pair $u\in V(G)$ and $u'\in V(G')$ we have $N_{D(G)}(u) = N_{D(G)}(u')$.	
	
If $G$ is a graph, then, clearly, $\mu(G) = n(G)$ if and only if $G$ is a complete graph, the same conclusion holds for the total mutual-visibility~\cite{kuziak-2023+}. (Here and later, $n(G)$ denotes the order of $G$.) For each vertex $u \in V(D(G))$, clearly, $N_{D(G)}[u]$ is a mutual-visibility set of $D(G)$. Thus $\mu (D(G)) \geq 2\Delta(G) + 1$. Hence for the double graphs of graphs with a universal vertex, we have the following observation:
 
\begin{observation}
If $G$ is a graph with $n(G)\ge 2$ and with a universal vertex, then $\mu(D(G)) = 2n(G)-1$. 
\end{observation}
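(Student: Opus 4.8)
The plan is to establish the equality by sandwiching $\mu(D(G))$ between matching lower and upper bounds, both equal to $2n(G)-1$.

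For the lower bound, I would invoke the inequality $\mu(D(G)) \ge 2\Delta(G)+1$ that was noted in the text just before the observation, which follows from the fact that the closed neighborhood $N_{D(G)}[u]$ of any vertex is always a mutual-visibility set. The decisive point is that a universal vertex $w$ of $G$ satisfies $\deg_G(w) = n(G)-1$, whence $\Delta(G) = n(G)-1$; substituting gives $\mu(D(G)) \ge 2(n(G)-1)+1 = 2n(G)-1$. Equivalently, one can exhibit the witnessing set directly: since $N_{D(G)}(w) = N_{D(G)}(w')$ and $w$ is universal in $G$, the closed neighborhood $N_{D(G)}[w]$ equals $V(G) \cup (V(G')\setminus\{w'\})$, a set of exactly $2n(G)-1$ vertices.

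For the upper bound, I would use that $n(D(G)) = 2n(G)$ together with the fact recalled in the text that $\mu(H) = n(H)$ if and only if $H$ is complete. It therefore suffices to check that $D(G)$ is not complete, which is immediate: for any $u$, the vertices $u$ and $u'$ are false twins in $D(G)$ and in particular are non-adjacent (an edge $uu'$ would require a loop $uu$ in $G$), so $D(G)$ has a non-adjacent pair. Hence $\mu(D(G)) \le n(D(G)) - 1 = 2n(G)-1$.

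Combining the two bounds yields $\mu(D(G)) = 2n(G)-1$. The hypothesis $n(G) \ge 2$ serves only to keep $D(G)$ connected and to exclude the degenerate single-vertex case, so I anticipate no genuine obstacle; the sole point requiring a moment of care is confirming that $u$ and $u'$ are truly non-adjacent, which is exactly what legitimizes the upper bound step.
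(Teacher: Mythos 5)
Your proof is correct and follows essentially the same route the paper intends: the lower bound comes from the stated inequality $\mu(D(G)) \ge 2\Delta(G)+1$ applied with $\Delta(G)=n(G)-1$, and the upper bound from the recalled fact that $\mu(H)=n(H)$ only for complete $H$, together with the observation that $u$ and $u'$ are non-adjacent in $D(G)$. The paper leaves these two steps implicit in the surrounding text, and you have filled them in exactly as intended.
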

	
\begin{theorem}
\label{thm:mu-of-double-graphs}
If $G$ is not a complete graph, then $\mu (D(G)) \geq n(G) + \mut(G)$. Moreover, the bound is sharp. 
\end{theorem}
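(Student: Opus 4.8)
The plan is to exhibit an explicit mutual-visibility set in $D(G)$ of size $n(G) + \mut(G)$, and then to find a concrete non-complete graph for which this bound is attained. For the lower bound, let $T \subseteq V(G)$ be a total mutual-visibility set of $G$ with $|T| = \mut(G)$, and consider the candidate set
\[
S = V(G) \cup \{u' : u \in T\} \subseteq V(D(G)),
\]
which has size $n(G) + \mut(G)$. The intuition is that placing a full copy of $G$ together with the primed twins of the total-visibility vertices should keep everything mutually visible, precisely because $T$ was chosen so that \emph{every} pair of vertices of $G$ is $T$-visible.

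The key step is to verify mutual-visibility for each type of pair in $S$, using the twin structure $N_{D(G)}(u) = N_{D(G)}(u')$ recalled at the start of the section. First I would recall the standard distance description in $D(G)$: for vertices lying in copies of $G$ one has $d_{D(G)}(x,y) = d_G(x,y)$ when $x \ne y$ are both unprimed (and likewise for mixed primed/unprimed endpoints via the twin relation), with the one subtlety that a vertex and its own twin $u, u'$ are at distance $2$ through any common neighbor. I would then check the three cases: (a) two unprimed vertices $x, y \in V(G)$; (b) an unprimed vertex $x$ and a primed vertex $w'$ with $w \in T$; and (c) two primed vertices $u', w'$ with $u, w \in T$. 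In each case the obstruction to visibility would be an element of $S$ lying internally on every shortest path, and this would force a corresponding three-vertex alignment back in $G$ among elements of $T$ (together with $V(G)$), contradicting that $T$ is a total mutual-visibility set. The twin correspondence lets me transport any blocking primed vertex $z'$ to its unprimed partner $z$ on shortest paths in $G$.

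The main obstacle I anticipate is the bookkeeping around the distance-$2$ twin pairs and short-distance degeneracies. A shortest $x,y$-path in $D(G)$ can alternate between the two copies, so I must be careful that a would-be blocker $z$ (or $z'$) in $S$ that is internal to \emph{every} shortest $D(G)$-path really does correspond to a vertex internal to every shortest $G$-path; the isomorphic twin copies give extra routing freedom, which generally \emph{helps} visibility, so the argument should go through, but the adjacency-heavy cases (distance $1$ or $2$) need explicit inspection rather than a clean appeal to $d_{D(G)} = d_G$. I would also use here that $G$ is not complete, which guarantees $D(G)$ is not complete and that the relevant shortest paths have genuine internal vertices to reason about.

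For sharpness I would point forward to the cycle computation promised in the introduction, namely $\mu(D(C_n)) = n$ for $n \ge 7$: since $\mut(C_n) = 0$ for such $n$, the bound reads $\mu(D(C_n)) \ge n(C_n) + \mut(C_n) = n$, which is met with equality. Thus the claimed formula for double graphs of cycles, to be established later in the section, simultaneously certifies that the inequality $\mu(D(G)) \ge n(G) + \mut(G)$ cannot be improved in general.
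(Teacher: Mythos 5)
Your lower-bound construction is, up to the copy-swapping automorphism of $D(G)$, exactly the paper's: the paper takes $V(G')\cup T$ where you take $V(G)\cup T'$, and both verifications amount to rerouting a $T$-avoiding shortest path of $G$ through the copy that meets the candidate set only in the twins of $T$. For every pair with distinct projections your plan is sound: a shortest path in $D(G)$ between such vertices projects to a shortest path of the same length in $G$, so total blocking in $D(G)$ would yield a pair of vertices of $G$ that is not $T$-visible, contradicting totality. The one place this transport argument cannot work is the twin pair $w,w'$ with $w\in T$: every shortest $w,w'$-path has the form $w\!-\!x\!-\!w'$ with $x\in N_G(w)$ or $x=z'$ for $z\in N_G(w)$, the unprimed midpoints are all in $V(G)\subseteq S$, so you need some neighbor $z$ of $w$ with $z\notin T$. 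This existence is the real point where non-completeness enters, via the fact the paper invokes: a total mutual-visibility set of a connected non-complete graph cannot contain a closed neighborhood $N_G[w]$ (otherwise a vertex at distance $2$ from $w$, or, if $w$ is universal, any non-adjacent pair, would fail to be $T$-visible). Your stated reason for using non-completeness --- that $D(G)$ is then non-complete and the relevant shortest paths have genuine internal vertices --- does not produce this neighbor, and your ``three-vertex alignment in $G$'' reduction cannot either, since $w\!-\!x\!-\!w'$ projects to a closed walk rather than to three aligned vertices of $G$. So the twin-pair case contains a genuine, though small and fixable, gap: the missing ingredient is precisely the no-closed-neighborhood property of total mutual-visibility sets.

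For sharpness you take a genuinely different route from the paper. The paper gives a self-contained example: in $D(P_n)$, $n\ge 3$, a mutual-visibility set cannot contain three doubled pairs $\{u_i,u_i'\}$, $\{u_j,u_j'\}$, $\{u_k,u_k'\}$ with $i<j<k$ (the pair $u_i,u_k$ would be blocked at position $j$ by $u_j$ or $u_j'$), hence $\mu(D(P_n))\le n+2=n+\mut(P_n)$, matching the lower bound. You instead forward-cite the later theorem $\mu(D(C_n))=n$ for $n\ge 7$ together with $\mut(C_n)=0$. This is logically admissible: that cycle theorem uses only the lower bound just proved, not the sharpness claim, so there is no circularity; and in fact $\mut(C_n)=0$ even follows a posteriori from combining the two statements, though you assert it without argument. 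The trade-off is that your certificate of sharpness rests on a substantially harder result proved later, whereas the paper's $P_n$ example settles sharpness in two lines within the proof itself.
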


\begin{proof}
	Consider an arbitrary $\mut$-set $S$ of $G$. We claim that $X = V(G') \cup S$ is a mutual-visibility set of $D(G)$. Since $S$ is a total mutual-visibility set of $G$, for any two vertices in $G$, there exists a shortest path whose internal vertices are in $V(G)\setminus S$. Hence any two vertices $u, v\in V(G') \cup S$, where $v\ne u'$, there exists a shortest $u,v$-path whose internal vertices are in $V(G)\setminus S$. Moreover, since $N_G[u] \subseteq S$ if and only if $G$ is complete, for each $u \in S$ there exists a vertex $v \in N_G(u)$ such that $v \in V(G)\setminus S$. Hence the path $u-v-u'$ demonstrates that $u$ and $u'$ are also $X$-visible whenever $u,u'\in X$. Thus $V(G') \cup S$ is indeed a mutual-visibility set of $D(G)$ which proves that $\mu (D(G)) \geq n + \mut (G)$.
	
To demonstrate the sharpness, consider the path graph $P_n$, $n\ge 3$, with the vertices $u_1, \ldots, u_n$. Let $S$ be an arbitrary mutual-visibility set of $D(P_n)$. If we would have indices $i< j < k$, such that $\{u_i, u_i', u_j, u_j', u_k, u_k'\}\in S$, then $u_i$ and $u_k$ would not be $S$-visible. Therefore, for at most two indices $i\in [n]$ we have $|S\cap \{u_i, u_i'\}| = 2$ which in turn implies that  $\mu (D(P_n)) \le n + 2$. Since $\mut(P_n) = 2$, the above proved bound yields $\mu (D(P_n)) \ge n + 2$, hence the bound is sharp. 
\end{proof}	

In the seminal paper on the mutual-visibility~\cite{distefano-2022} it was proved that the mutual-visibility problem is NP-complete, while in~\cite{cicerone-2023a} the same conclusion was obtained for each of the problems from the variety of mutual-visibility problems including the total mutual-visibility problem. Theorem~\ref{thm:mu-of-double-graphs} could indicate that the mutual-visibility problem is difficult also when restricted to double graphs. 

The next result yields another family for which the bound of Theorem~\ref{thm:gp-of-double} is sharp. 

\begin{theorem}
\label{thm:mu-of-double-graphs-of-cycles}
If 	$n\ge 7$, then $\mu (D(C_n)) = n$.
\end{theorem}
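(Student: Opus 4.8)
The plan is to prove both inequalities separately, the lower bound being quick and the upper bound carrying the real content. For the lower bound I would argue directly that the set $V(C_n')$ of all primed vertices is a mutual-visibility set of size $n$. Recalling $N_{D(C_n)}(u_i')=\{u_{i-1},u_{i+1},u_{i-1}',u_{i+1}'\}$, for $i\ne j$ one has $d_{D(C_n)}(u_i',u_j')=d_{C_n}(u_i,u_j)=:d$, and a shortest $u_i',u_j'$-path may be routed $u_i'-u_{i+1}-\cdots-u_{j-1}-u_j'$ along a shortest arc of the cycle using only unprimed interior vertices; since these lie outside $V(C_n')$, the pair is $V(C_n')$-visible. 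Hence $\mu(D(C_n))\ge n$. (Equivalently, since $\mut(C_n)=0$ for $n\ge 7$, this also follows from Theorem~\ref{thm:mu-of-double-graphs}.)

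For the upper bound, let $S$ be any mutual-visibility set and classify each index according to how much of its fibre $\{u_i,u_i'\}$ lies in $S$: put $A=\{i:|S\cap\{u_i,u_i'\}|=2\}$, $B=\{i:|S\cap\{u_i,u_i'\}|=1\}$, $C=\{i:|S\cap\{u_i,u_i'\}|=0\}$, and $T=A\cup B$. Then $|S|=2|A|+|B|=n+|A|-|C|$, so it suffices to show $|A|\le|C|$. The key structural input is a visibility characterization: for distinct $i,k$ every shortest $u_i,u_k$-path is monotone along a shortest arc of the cycle, the only freedom being the choice of primed or unprimed copy at each interior index. Consequently two vertices with indices $i,k\in T$ are $S$-visible if and only if some shortest arc between $i$ and $k$ has no interior index in $A$ — an index in $A$ blocks every path through it, while an index in $B\cup C$ always offers a free copy.

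From this I would extract a gap condition. Assume $|T|\ge 3$ (otherwise $|S|\le 4\le n$). For $a\in A$ let $i^-(a),i^+(a)$ be the nearest $T$-indices on either side, at distances $g^-(a),g^+(a)$; the only interior $T$-index of the arc through $a$ joining them is $a$ itself. Applying the characterization to the pair $i^-(a),i^+(a)$ forces that arc not to be a strictly shortest arc, i.e. $g^-(a)+g^+(a)\ge\lceil n/2\rceil\ge 4$ for $n\ge 7$; in particular $a-1$ and $a+1$ are empty whenever the corresponding gap exceeds $1$. Now define $c\colon A\to C$ by $c(a)=a+1$ if $a+1\in C$ and $c(a)=a-1$ otherwise; in the latter case $a+1\in T$ gives $g^+(a)=1$, so $g^-(a)\ge\lceil n/2\rceil-1\ge 3$ and $a-1\in C$, so $c$ is well defined. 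If $c(a)=c(a')=x$ with $a\ne a'$, then $\{a,a'\}$ are the two neighbours of $x\in C$, say $a=x-1$ charged right and $a'=x+1$ charged left; but charging $a'$ left means $a'+1\in T$ (so $g^+(a')=1$), while $a'-1=x\in C$ and $a'-2=a\in A$ give $g^-(a')=2$, whence $g^-(a')+g^+(a')=3<\lceil n/2\rceil$, contradicting the gap condition. Thus $c$ is injective, $|A|\le|C|$, and $|S|\le n$, which combined with the lower bound yields $\mu(D(C_n))=n$.

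The hard part will be the visibility characterization in the second paragraph: one must verify carefully that shortest paths in $D(C_n)$ are monotone along a cycle arc — so that the indices in $A$ are exactly the obstructions — and treat correctly the borderline case $d_{C_n}(i,k)=n/2$ for even $n$, where two shortest arcs exist and visibility can be salvaged along the opposite arc. Once this characterization and the resulting bound $g^-(a)+g^+(a)\ge\lceil n/2\rceil$ are secured, the gap condition and the injective charging are routine, and the borderline case only strengthens the inequalities used in the injectivity argument.
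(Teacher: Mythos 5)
Your proof is correct, and your upper-bound argument takes a genuinely different route from the paper's; the lower bounds coincide (your set $V(C_n')$ is exactly the set $V(G')\cup S$ of Theorem~\ref{thm:mu-of-double-graphs} with $S=\emptyset$). The paper's proof normalizes first: it picks a $\mu$-set $S$ containing as many vertices of $V(C_n')$ as possible, uses the false-twin exchange of Lemma~\ref{lem:false-twins}(i) to assume that $u\in S\cap V(C_n)$ forces $u'\in S$, and then runs a case analysis on $|S\cap V(C_n)|$: three or more doubled indices force $|S|=6<n$, and exactly two doubled indices are excluded by inspecting the neighbours of the unique missing primed vertex. You instead work with an arbitrary mutual-visibility set via the fibre count $|S|=n+|A|-|C|$, the structural fact that shortest paths in $D(C_n)$ between distinct fibres are precisely liftings of shortest arcs of $C_n$ with a free choice of copy at each interior index, the resulting gap condition $g^-(a)+g^+(a)\ge\lceil n/2\rceil\ge 4$, and an injective charging $A\to C$. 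The characterization you flagged as the hard part does hold: $D(C_n)$ is the lexicographic product $C_n\circ\overline{K_2}$, no edge stays inside a fibre, so a shortest path between indices $i\ne k$ projects to a walk of length $d_{C_n}(u_i,u_k)$ in $C_n$, hence to a shortest arc, and conversely any copy-choice along a shortest arc lifts to a shortest path. Also, the even-$n$ borderline case needs no special treatment: your gap condition only uses the implication that a \emph{strictly} shortest arc through $a\in A$ would block visibility, so $g^-(a)+g^+(a)=n/2$ is simply allowed, and the injectivity contradiction $3<\lceil n/2\rceil$ is unaffected. As for what each approach buys: the paper's is shorter given the lemmas already in place, and its case analysis also produces the bounds for $n=4,5,6$ recorded after the theorem; yours avoids any extremal choice of $\mu$-set, yields the sharper structural statement $|A|\le|C|$ for every mutual-visibility set, and makes the threshold $n\ge 7$ transparent, since for $n=6$ the gap condition weakens to $3$ and the charging can collide, consistent with $\mu(D(C_6))=7$.
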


\begin{proof}
Let $V(D(C_n)) = V \cup V'$, where $V = V(C_n)$ and $V' = \{u':\ u \in V\}$. Let $S$ be a $\mu$-set of $D(C_n)$ such that it contains as many vertices of $V'$ as possible. This choice of $S$, together with Lemma~\ref{lem:false-twins}(i), implies that if $u \in V \cap S$, then $u'$ also belongs to $S$.

If $V'\subseteq S$, then no vertex from $V$ can be present in $S$, because if $v\in S$, then the two neighbors of $v'$ in $V'$  are not $S$-visible. Therefore, in this case $|S| = n$. By the same argument we also get that if $|S| > n$, then $|S \cap V| \geq 2$. We may hence assume in the rest that not all vertices from $V'$ are in $S$. We now distinguish two cases. 

Assume first that $S$ contains at least three vertices from $V$, say  $u, v, w \in V \cap S$. Then, by the maximality assumption, $u'$, $v'$ and $w'$ are also in $S$. Now, if $x'$ belongs to $S$, where $x \ne u, v, w$, then at least one of the shortest paths in $V'$ from $x'$ to $u'$, $v'$ or $w'$ must contain at least one vertex among $u'$, $v'$ and $w'$ as an internal vertex. We may assume without loss of generality that a shortest $x',u'$-path contains $v'$ as an internal vertex. (It could be that also the other $x',u'$-path in $D(C_n)[V']$ is shortest. Then it contains $w$ as an internal vertex, and the argument is parallel.) Since $v$ is also in $S$, the vertices $x'$ and $u'$ are not $S$-visible. Therefore, $|S\cap V'| = 3$ so that $|S|=6$, a contradiction with Theorem~\ref{thm:mu-of-double-graphs} which asserts that $|S| \ge n \ge 7$. 

Assume second that $S\cap V = \{u, v\}$. Using the maximality assumption again, $u', v'\in S$. There is nothing to prove if $|S\cap V'| \le n-2$, hence assume that $|S\cap V'| = n-1$. Let $w'\in V'$ be the vertex not in $S$. If $w$ is not adjacent to both $u$ and $v$, then we may assume without loss of generality that the two neigbors of $u'$ are in $S$, but then they are not $S$-visible. Similarly, if $w$ is in $C_n$ adjacent to both $u$ and $v$,  and $z$ is the other neighbor of $u$, then $z'$ and $v'$ are not $S$-visible. 

As none of the cases above is possible we can conclude that $\mu (D(C_n)) \le  n$ and so $\mu (D(C_n)) = n$. 
\end{proof}	

The proof of Theorem~\ref{thm:mu-of-double-graphs-of-cycles} asserts that for any $n \geq 4$, we have $\mu(D(C_n)) \leq \max \{6, \lfloor \frac{n}{2} \rfloor + 4, n\}$. Hence $\mu(D(C_4)) \leq 6$, $\mu (D(C_5)) \leq 6$ and $\mu(D(C_6)) \leq 7$. Also, $\{v_1', v_2', v_3', v_4', v_1, v_2\}$ is a mutual-visibility set of $D(C_4)$, $\{v_2', v_3', v_4', v_5', v_2, v_5\}$ is a mutual-visibility set of $D(C_5)$ and $\{v_2', v_3', v_4', v_5', v_6', v_2, v_6\}$ is a mutual-visibility set of $D(C_6)$. Therefore $\mu(D(C_4)) = \mu(D(C_5)) = 6$ and $\mu(D(C_6)) = 7$.\\	
 
While the lower bound of Theorem~\ref{thm:mu-of-double-graphs} is sharp, it can, on the other hand, be arbitrarily bad, that is, the difference $\mu (D(G)) - (n(G) + \mut(G))$ can be arbitrarily large. For example, consider the balloon graph $G_k$, $k\ge 2$, constructed from the disjoint union of $k$ copies of $C_5$ and a vertex which is adjacent to exactly one vertex of each of the $k$ copies of $C_5$. Then we have: 

\begin{proposition}
\label{prop:ballon}
If $k\ge 2$, then $\mu (D(G_k)) - (n(G_k) + \mut(G_k)) \geq k - 1$.
\end{proposition}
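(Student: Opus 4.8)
The plan is to evaluate the two invariants on the right-hand side exactly and then to exhibit a single mutual-visibility set of $D(G_k)$ of the required size. Write the $i$-th copy of $C_5$ as $v_{i,0}v_{i,1}v_{i,2}v_{i,3}v_{i,4}$ (cyclically, indices mod $5$), with $v_{i,0}$ the vertex joined to the central vertex $w$ of $G_k$; then $n(G_k)=5k+1$. I would first argue that $\mut(G_k)=0$. Inside each copy every cycle vertex $v_{i,j}$ is the unique common neighbour of $v_{i,j-1}$ and $v_{i,j+1}$, so $v_{i,j}$ is an internal vertex of the only shortest $v_{i,j-1},v_{i,j+1}$-path; hence no cycle vertex can lie in a total mutual-visibility set. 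Likewise $w$ is the unique common neighbour of $v_{1,0}$ and $v_{2,0}$ (here $k\ge 2$ is used), so $w$ cannot lie in such a set either. As $V(G_k)$ consists of the cycle vertices together with $w$, any nonempty candidate fails, so $\mut(G_k)=0$. The statement then reduces to producing a mutual-visibility set of $D(G_k)$ of size $6k$, since the difference would be $6k-(5k+1)=k-1$.

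For the construction I would, in each copy, take the six vertices $A_i=\{v_{i,0},v_{i,1},v_{i,2},v_{i,4},v_{i,2}',v_{i,4}'\}$ and set $X=\bigcup_{i=1}^{k}A_i$, so that $|X|=6k$; crucially $w,w'\notin X$ and, in each copy, $v_{i,0}',v_{i,1}'\notin X$. Recalling that $d_{D(G_k)}(a,b)=d_{G_k}(a,b)$ for $a\ne b$ while $d_{D(G_k)}(a,a')=2$, and that $v_{i,j},v_{i,j}'$ are false twins (Lemma~\ref{lem:false-twins}), the verification splits in two. For two vertices of the same $A_i$ one checks directly, over the $\binom{6}{2}=15$ pairs, that $A_i$ is a mutual-visibility set of the copy, noting that every shortest path realizing this can be chosen inside the copy (the subgraph induced by a copy is a $D(C_5)$ and any detour through $w$ is no shorter), so it is unaffected by the rest of $D(G_k)$. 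For two vertices $a\in A_i$, $b\in A_{i'}$ with $i\ne i'$, I would use that $\{w,w'\}$ is a $2$-cut separating the copies, whence $d(a,b)=d(a,w)+d(w,b)$, and concatenate a shortest $a,w$-path with a shortest $w,b$-path. The essential point is that every chosen vertex admits an $X$-avoiding shortest path down to $w$: $v_{i,0}$ is adjacent to $w$; each of $v_{i,1},v_{i,4},v_{i,4}'$ reaches $w$ through $v_{i,0}'\notin X$; and each of $v_{i,2},v_{i,2}'$ reaches $w$ through $v_{i,1}'\notin X$ and then $v_{i,0}'\notin X$. Gluing two such descents at $w\notin X$ produces an $X$-avoiding shortest $a,b$-path, so $X$ is a mutual-visibility set and $\mu(D(G_k))\ge 6k$.

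The main obstacle is exactly the compatibility of these two requirements. A set that is as large as possible inside one copy, for instance the natural $\mu$-set $\{v_{i,1}',v_{i,2}',v_{i,3}',v_{i,4}',v_{i,1},v_{i,4}\}$ of $D(C_5)$ (of size $\mu(D(C_5))=6$), blocks every shortest descent of its level-$2$ vertices to $w$, because then both of $v_{i,1},v_{i,1}'$ lie in $X$; such a set fails cross-copy visibility although it is perfectly mutually visible inside its own copy. The real difficulty is therefore to choose, in each copy, a six-element set that is at once a mutual-visibility set of $D(C_5)$ and leaves, for each of its vertices, at least one $X$-free shortest path to $v_{i,0}$ and on to $w$; the set $A_i$ above is engineered to do both, keeping $v_{i,0}'$ and $v_{i,1}'$ outside $X$ as escape routes. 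Once a correct $A_i$ is fixed, both halves of the verification are short finite checks, and the proposition follows.
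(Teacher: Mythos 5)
Your proposal is correct and follows essentially the same route as the paper: it establishes $\mut(G_k)=0$ (you verify it directly, which the paper notes as an alternative to citing the characterization of graphs with total mutual-visibility number zero), uses $n(G_k)=5k+1$, and exhibits an explicit mutual-visibility set of size $6k$ in $D(G_k)$, exactly as the paper does via its figure. Your write-up is simply more self-contained, spelling out the within-copy and cross-copy visibility checks that the paper leaves to the picture.
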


\proof
Clearly, $n(G_k) = 5k+1$. Using the characterization~\cite[Theorem 8]{tian-2024} of graphs $G$ with $\mut(G) = 0$ (or by verifying it directly), we can deduce that $\mut(G_k) = 0$. Further, we can use Fig.~\ref{fig:balloon-graph} to find out that $\mu (D(G_k)) \ge 6k$. 

\begin{figure}[ht!]
\centering
\includegraphics[width=0.9\linewidth]{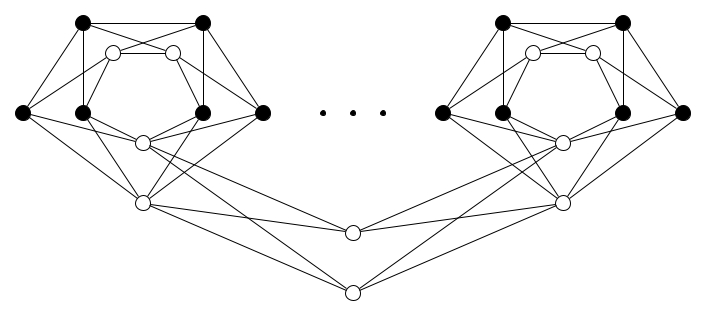}
	\caption{A mutual-visibility set in the double graph of a balloon graph.}
	\label{fig:balloon-graph}
\end{figure}

Hence we have 
$$\mu (D(G_k)) - (n(G_k) + \mut(G_k)) \geq 6k - ((5k+1) + 0) = k-1\,,$$
and we are done. 
\qed

\section{General position in double graphs}	
\label{sec:gp-double}

In this section we consider the general position number of double graphs. We will use the convention that if $S\subseteq V(G) \subset V(D(G))$, then $S' = \{u'\in V(G'):\ u\in S\}$.

We first state a simple but useful lemma which easily follows from the fact that if $u\in V(G) \subset V(D(G))$ is not an isolated vertex, then $d_{D(G)}(u,u') = 2$. 

\begin{lemma}\label{lem:twins-in-D(G)}
Let $G$ be a graph, $uv\in E(G)$, and let $S$ be a general position set of $D(G)$. If $|S\cap \{u,v,u',v'\}| \ge 2$, then $|S\cap \{u,v,u',v'\}| = 2$.
\end{lemma}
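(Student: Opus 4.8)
The plan is to prove the slightly stronger statement that a general position set $S$ of $D(G)$ contains \emph{at most} two vertices of $\{u,v,u',v'\}$; together with the hypothesis $|S\cap\{u,v,u',v'\}|\ge 2$ this yields equality. Thus the whole task reduces to ruling out the possibility that $S$ contains three (hence also four) of $u,v,u',v'$.

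First I would record the pairwise distances in $D(G)$ among these four vertices. Since $uv\in E(G)$, the edges $uv$, $uv'$, $u'v$ and $u'v'$ are all present in $D(G)$ (all four follow from $uv\in E(G)$ together with the defining property $N_{D(G)}(u)=N_{D(G)}(u')$), whereas $u,u'$ and $v,v'$ are false twins and hence non-adjacent. Because $u$ is non-isolated (it is adjacent to $v$), the fact quoted just before the lemma gives $d_{D(G)}(u,u')=2$, realised by the path $u-v-u'$; symmetrically $d_{D(G)}(v,v')=2$ via $v-u-v'$. Hence $D(G)[\{u,v,u',v'\}]$ is the $4$-cycle $u-v-u'-v'-u$, in which $\{u,u'\}$ and $\{v,v'\}$ are the two diagonal pairs at distance $2$ while every remaining pair is at distance $1$.

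Next I would argue that $S$ cannot contain three of these vertices. Any three of them induce a path on three vertices, so they comprise one diagonal pair, say $\{x,x'\}$ with $d_{D(G)}(x,x')=2$, together with a common neighbour $y$ of $x$ and $x'$. Then $d_{D(G)}(x,y)+d_{D(G)}(y,x')=1+1=2=d_{D(G)}(x,x')$, so $y$ lies on a shortest $x,x'$-path and the three vertices $x,y,x'$ of $S$ lie on a common geodesic, contradicting the definition of a general position set (equivalently, violating Theorem~\ref{thm:gpsets}). This forces $|S\cap\{u,v,u',v'\}|\le 2$ and completes the proof. There is no substantial obstacle here; the only point requiring care is the distance bookkeeping, in particular confirming $d_{D(G)}(u,u')=2$ rather than $1$ — the two vertices being non-adjacent false twins — since it is precisely this that makes the middle vertex realise a shortest path between the diagonal pair.
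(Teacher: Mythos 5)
Your proof is correct and is exactly the argument the paper has in mind: the paper states the lemma as an immediate consequence of the fact that $d_{D(G)}(u,u')=2$ for non-isolated $u$, and your write-up simply fills in the details, showing that $\{u,v,u',v'\}$ induces a $4$-cycle whose diagonal pairs are at distance $2$, so any three of these vertices lie on a common geodesic. No gap; you have merely made explicit what the paper leaves to the reader.
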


Note that Lemma~\ref{lem:twins-in-D(G)} in particular implies that if $S$ is a general position set of $D(G)$ such that $u, u' \in S$, then both $u$ and $u'$ are non-adjacent to all other vertices in $S$. 

\begin{theorem}
\label{thm:gp-of-double}
If $G$ is a graph, then $\gp(G) \leq \gp(D(G)) \leq 2\gp(G)$ and the bounds are sharp. Moreover, $\gp(D(G)) = 2\gp(G)$ if and only if the $gp$-sets of $D(G)$ are of the form $X \cup X'$, where $X$ is an independent $gp$-set of $G$.
\end{theorem}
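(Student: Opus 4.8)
The plan is to prove the three-part statement in stages: first the two inequalities, then their sharpness, and finally the equality characterization. For the lower bound $\gp(G)\le\gp(D(G))$, I would take a $\gp$-set $S$ of $G$ and argue that $S$, viewed as a subset of $V(G)\subset V(D(G))$, remains a general position set in $D(G)$. The key observation here is that distances inside the $G$-copy are preserved: for $u,v\in V(G)$ we have $d_{D(G)}(u,v)=d_G(u,v)$ provided this distance is at least $2$ (a length-$2$ detour through $V(G')$ can never beat a genuine edge), so no new shortest $u,v$-path through $S$-vertices is created, and Theorem~\ref{thm:gpsets} transfers directly. For the upper bound $\gp(D(G))\le 2\gp(G)$, I would start from a $\gp$-set $T$ of $D(G)$ and consider its natural projection $\pi:V(D(G))\to V(G)$ sending both $u$ and $u'$ to $u$. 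By Lemma~\ref{lem:twins-in-D(G)}, no fibre $\{u,u'\}$ contributes more than two points to $T$, so $|T|\le 2|\pi(T)|$; the crux is then to show that $\pi(T)$ is itself a general position set of $G$, giving $|T|\le 2\gp(G)$.

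Establishing that $\pi(T)$ is in general position in $G$ is what I expect to be the main obstacle, since $\pi$ can identify distinct vertices of $T$ and could in principle destroy the in-transitive, distance-constant structure demanded by Theorem~\ref{thm:gpsets}. I would handle this by checking the characterization componentwise: the components of $G[\pi(T)]$ are the projections of components of $D(G)[T]$, and using Lemma~\ref{lem:twins-in-D(G)} together with the fact that $u,u'$ are false twins with $d_{D(G)}(u,u')=2$, I would argue that each component of $D(G)[T]$ projects injectively onto a complete subgraph of $G[\pi(T)]$ and that the distance-constant, in-transitive partition is inherited. The delicate point is ruling out the situation where both $u,u'\in T$ lie in a single component; Lemma~\ref{lem:twins-in-D(G)} (and the remark following it, that such $u,u'$ are non-adjacent to all other $T$-vertices) forces $\{u,u'\}$ to be its own component of $D(G)[T]$, whose projection is the single vertex $u$, so the collision is harmless.

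For sharpness I would exhibit two families. For the lower bound being tight, any graph where doubling does not help, for instance a complete graph $K_n$ where $\gp(K_n)=n$ and one checks $\gp(D(K_n))=n$ as well (here $D(K_n)$ is a complete multipartite-type graph whose $\gp$-number I would compute directly), or more simply a small explicit example; for the upper bound being attained, I would take an edgeless or suitably sparse graph $G$ whose $\gp$-set $S$ is independent, and verify that $S\cup S'$ is a general position set of $D(G)$ of size $2\gp(G)$. This last example also feeds directly into the equality characterization.

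For the equality statement, the forward direction follows once I show that any $\gp$-set attaining $2\gp(G)$ must have every fibre fully present, i.e.\ be of the form $X\cup X'$ with $|X|=\gp(G)$, and that $X$ must be independent: if $X$ had an edge $uv$, then $\{u,u',v,v'\}\cap T$ would have size at most $2$ by Lemma~\ref{lem:twins-in-D(G)}, contradicting that all four lie in $T$. I would also verify $X$ is itself a $\gp$-set of $G$ via the lower-bound argument applied to $\pi(T)=X$. For the reverse direction, given an independent $\gp$-set $X$ of $G$, I would confirm using Theorem~\ref{thm:gpsets} and the false-twin relation (Lemma~\ref{lem:false-twins}(i)) that $X\cup X'$ is a general position set of $D(G)$: independence makes each $\{u,u'\}$ a two-vertex component at mutual distance $2$, and the in-transitive distance-constant partition of $X$ in $G$ lifts to one of $X\cup X'$ in $D(G)$. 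The one subtlety to watch is confirming that introducing the paired copies $X'$ does not create a short three-in-a-row configuration through the $V(G')$ adjacencies, which again reduces to the distance-preservation facts used for the lower bound.
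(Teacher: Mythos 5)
Your proposal is correct, but your route through the upper bound and the equality characterization is genuinely different from the paper's. The paper simply writes a gp-set $S$ of $D(G)$ as $(S\cap V(G))\cup(S\cap V(G'))$, notes that $G$ and $G'$ are isometric subgraphs of $D(G)$ so each half is a general position set in its own copy, and obtains $|S|\le 2\gp(G)$ in one line; for the forward direction of the characterization it then uses the false-twin exchange of Lemma~\ref{lem:false-twins}(i) (if $u\in S$ but $u'\notin S$, swapping $u$ for $u'$ would yield a general position set of $G'$ of size $\gp(G')+1$, a contradiction) to force $S=(S\cap V(G))\cup(S\cap V(G))'$, and Lemma~\ref{lem:twins-in-D(G)} to force independence. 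You instead project along $\pi(u)=\pi(u')=u$ and prove that $\pi(T)$ is in general position in $G$, giving $|T|\le 2|\pi(T)|\le 2\gp(G)$. This costs you the extra work you rightly identify as the main obstacle, but it buys a unified treatment: equality forces both $|\pi(T)|=\gp(G)$ and full fibres, which is exactly the $X\cup X'$ form, with independence again from Lemma~\ref{lem:twins-in-D(G)}, so no twin-exchange step is needed. Incidentally, your component-wise verification that $\pi(T)$ is in general position can be replaced by a short distance argument: for vertices in distinct fibres all $D(G)$-distances coincide with the $G$-distances of their projections, so any betweenness relation $d_G(u,w)=d_G(u,v)+d_G(v,w)$ among three projected vertices lifts to their preimages in $T$, contradicting that $T$ is in general position. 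Your sharpness examples and your reverse direction (an independent gp-set $X$ of $G$ lifts to the general position set $X\cup X'$ of $D(G)$) coincide with what the paper does, the latter only implicitly. Two small inaccuracies to repair: since $u$ and $u'$ are false twins they are non-adjacent, so for independent $X$ the set $X\cup X'$ induces singleton components at pairwise distance at least $2$, not ``two-vertex components'' $\{u,u'\}$; and the edgeless graph is a degenerate (disconnected) choice for upper-bound sharpness, as Theorem~\ref{thm:gpsets} presupposes connectivity, so it is safer to use $P_n$ ($n\ge 3$) or $C_n$ ($n\ge 6$) as the paper does.
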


\begin{proof}
If $S$ is a gp-set of $G$, then $S \subset V(D(G))$ is a general position set of $D(G)$. Hence $\gp(G) \leq \gp(D(G))$. Let now $S$ be a gp-set of $D(G)$. Since $G$ and $G'$ are isometric subgraphs of $D(G)$, we infer that $S \cap V(G)$ is a general position set of $G$ and $S \cap V(G')$ is a general position set of $G'$. Hence 
$$\gp (D(G)) = |S| = |S \cap V(G)| + |S \cap V(G')| \le \gp(G) + \gp(G') = 2 \gp(G)\,,$$
establishing the upper bound. 
   	
To see that the lower bound is sharp, note that $\gp(D(K_n)) = n$ holds for all $n\ge 2$ by Lemma~\ref{lem:twins-in-D(G)}. 

Assume now that $\gp(D(G)) = 2\gp(G)$ and consider an arbitrary gp-set $S$ of $D(G)$. As we already observed, $S \cap V(G)$ is a general position set of $G$ and $S \cap V(G')$ is a general position set of $G'$, therefore $S \cap V(G)$ is a gp-set of $G$ and $S \cap V(G')$ a gp-set of $G'$. If $(S \cap V(G))' \ne S \cap V(G')$, then we may assume without loss of generality that there exists a vertex $u\in S$ such that $u'\notin S$. But then an application of Lemma~\ref{lem:false-twins} yields a general position set in $G'$ larger that $\gp(G') = \gp(G)$, a contradiction. Hence $S = (S \cap V(G)) \cup (S \cap V(G))'$. Moreover, by Lemma~\ref{lem:twins-in-D(G)}, $S \cap V(G)$ must be an independent set and we are done. 
\end{proof}
	
There are many graphs admitting independent gp-sets which in turn explicitly demonstrate that the upper bound of Theorem~\ref{thm:mu-of-double-graphs} is sharp. This is in particular the case for paths $P_n$, $n\ge 3$, and for cycles $C_n$, $n\ge 6$. Hence by Theorem~\ref{thm:gp-of-double} we get $\gp(D(P_n)) = 4$, $n \geq 3$, and $\gp(D(C_n)) = 6$, $n \geq 6$. More on independent general position sets can be found in~\cite{thomas-2021}.

Another family of graphs for which the lower bound in Theorem~\ref{thm:gp-of-double} is sharp are the edge deleted complete graphs $K_n^-$, $n\ge 5$, that is, $K_n^-$ is the graph obtained from $K_n$ by deleting one of its edges. Note first that since $D(K_n^-)$ contain a clique of order $n-1$ we have $\gp(D(K_n^-)) \ge n-1$. Let $u$ and $v$ be the non-adjacent pair of vertices in $K_n^-$ and let $S$ be a general position set of $D(K_n^-)$. If $w, w'\in S$, where $w\ne u,v$, then $S = \{w,w'\}$. Assume hence that for each $w\ne u,v$, the set $S$ contains at most one vertex among $w$ and $w'$. If $|S| \ge n$, then we must have that $|S\cap \{u, u', v, v'\}| \ge 2$. However, as soon as this is fulfilled we can infer that in each possible case we have $S \subseteq \{u, v, u', v'\}$. We conclude that $\gp(D(K_n^-)) = n$ for $n\ge 5$.

\section{Mutual-visibility in Mycielskian graphs}
\label{sec:mycielski}

The general position number of Mycielskian graphs was investigated in~\cite{thomas-2024+}, in this section we complement this research by considering   the mutual-visibility number of Mycielskian graphs. We find the exact value of the mutual-visibility number of Mycielskian graph of paths, cycles and graphs with universal vertices. Bounds of mutual-visibility number of Mycielskian graph of graphs having diameter at most three in terms of outer mutual-visibility and mutual-visibility of the graph are also presented.

\begin{theorem}\label{thm:mu-of-mycielskian-graphs-of-paths}
If $n\ge 5$, then $\mu(M(P_n)) = n +  \lfloor \frac{n+1}{4}\rfloor$.
\end{theorem}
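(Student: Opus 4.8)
The plan is to work directly with the metric structure of $M(P_n)$, treating $v^*$ as a hub. Writing $V(P_n)=\{u_1,\dots,u_n\}$ along the path, $V' = \{u_1',\dots,u_n'\}$, I would first record the distances: the shadows form an independent set with $d(u_i',u_j')=2$ realized through $v^*$; every $u_i$ reaches $v^*$ in two steps via either neighbour-shadow $u_{i-1}'$ or $u_{i+1}'$; consequently $d(u_i,u_j)=\min\{|i-j|,4\}$, where for $|i-j|\ge 5$ the \emph{only} shortest paths pass through $v^*$, and $d(u_i,u_j')=3$ for $|i-j|\ge 3$, again realized only through $v^*$ when $|i-j|\ge 4$. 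The decisive consequence is a \emph{gateway principle}: for $S$ with $v^*\notin S$, a path vertex $u_i\in S$ sees a far vertex (a shadow at index-distance $\ge 3$, or a path vertex at index-distance $\ge 5$) exactly when some neighbour shadow $u_{i-1}'$ or $u_{i+1}'$ lies outside $S$. I would also dispose of the case $v^*\in S$ at the outset: then two shadows $u_i',u_j'$ with $|i-j|\ne 2$ lose their only short connection, so at most two shadows survive in $S$ and this case is easily seen to be non-extremal.

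So assume $v^*\notin S$ and split $S$ into its path part $A=S\cap V(P_n)$ and shadow part $B=S\cap V'$. Since $v^*$ keeps all shadow pairs mutually visible, $|S|=|A|+|B|=|A|+(n-|R|)$, where $R\subseteq[n]$ indexes the \emph{absent} shadows. The whole theorem thus reduces to the estimate $|A|-|R|\le\lfloor\frac{n+1}{4}\rfloor$.

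For the lower bound I would exhibit a periodic set realizing $|A|-|R|=\lfloor\frac{n+1}{4}\rfloor$: delete the shadows on an arithmetic progression of difference $4$ (with the offset chosen according to $n \bmod 4$ to maximize the count near the endpoints), and place a path vertex in each column flanking a deleted shadow. Each deleted shadow then serves as a gateway for its two neighbouring path vertices, giving net gain $+1$ per deletion. Validity is checked against exactly the local constraints: adjacent $u_i,u_{i+1}$ are fine; a distance-$2$ pair $u_i,u_{i+2}$ (path--path or path--shadow) needs the middle column cleared, which holds since path vertices sit on alternate columns and the intervening shadow is deleted; a state where both $u_i,u_i'\in S$ needs a neighbouring path vertex absent; and every remaining pair is handled by the gateway principle through $v^*$. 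A direct count over the progression yields precisely $n+\lfloor\frac{n+1}{4}\rfloor$, and I would sanity-check the small residues $n\equiv 0,1,2,3\pmod 4$ explicitly.

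For the upper bound I would argue that (outside a bounded ``clustered'' exception where all path vertices lie in one window of five columns, which is harmless since the target already exceeds $n$) every $u_i\in A$ must own a gateway, i.e.\ an absent shadow in $\{i-1,i+1\}\cap R$; a single absent shadow at column $c$ can gateway at most the two path vertices $c\pm1$. Combined with the distance-$2$ restrictions—which forbid two adjacent ``doubled'' columns from being flanked by further path vertices and thereby force the gateway columns to be spread out—this limits the net gain to one per four columns. The cleanest route is a window/charging argument: show that no four consecutive columns can carry six selected vertices (ruling out the offending local patterns via an explicit visibility or gateway failure), then sum over columns with care at the boundary to extract the exact floor. \textbf{The main obstacle} is precisely this last step: the per-four-column count of $5$ is attained but the boundary columns can be packed more densely (as the $n\equiv 3\pmod 4$ construction shows), so a naive average of $\tfrac{5}{4}$ per column undershoots and the counting must be balanced exactly against $\lfloor\frac{n+1}{4}\rfloor$ through a somewhat delicate case analysis of the admissible column states.
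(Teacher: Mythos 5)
Your distance analysis of $M(P_n)$, the disposal of the case $v^*\in S$, the reduction to bounding $|A|-|R|$, and the periodic lower-bound construction all match the paper and are sound. The problem is the upper bound: you have not proved it, and you say so yourself. The statement ``show that no four consecutive columns can carry six selected vertices, then sum over columns with care at the boundary'' is exactly the hard part of the theorem, and your own closing sentence concedes that the naive $\tfrac54$-per-column average undershoots and that a ``somewhat delicate case analysis of the admissible column states'' is still needed. A plan that ends by naming the unresolved obstacle is not a proof; as it stands, nothing in the proposal rules out a configuration with $|A|-|R|=\lfloor\frac{n+1}{4}\rfloor+1$. The paper closes this gap differently: it first proves a structural Claim --- every $\mu$-set avoiding $v^*$ can be transformed, by explicit local exchange moves (swapping a path vertex $v_k$ for its shadow $v_k'$, etc., each verified against a short list of forbidden patterns), into a $\mu$-set whose path part is an \emph{independent} set. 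Once independence is available, the gateway counting becomes easy: any two far-apart selected path vertices each need an absent neighbour shadow, an absent shadow serves at most two selected path vertices, and the extremal packing is the periodic one. That reduction-to-independence step is precisely the ingredient your column-state charging scheme is missing, and without it (or a worked-out substitute) the proof is incomplete.

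A secondary but genuine error: you dismiss the ``clustered'' exception, where all selected path vertices lie in a window of five columns, as ``harmless since the target already exceeds $n$.'' It does not. Clustering only gives $|A|\le 5$, hence $|S|\le n+5$, whereas the target $n+\lfloor\frac{n+1}{4}\rfloor$ is smaller than $n+5$ for all $5\le n\le 18$. In that range the clustered case needs its own argument (e.g., three consecutive selected path vertices $u_i,u_{i+1},u_{i+2}$ force $u_{i+1}'\notin S$, so dense windows also pay in absent shadows), which you do not supply. Also, your ``gateway principle'' is stated slightly too strongly: for a path vertex and a shadow at index-distance exactly $3$ there is a shortest path along $P_n$ avoiding $v^*$, so the ``exactly when'' only holds at index-distance $\ge 4$; this matters when the charging argument is made precise.
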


\begin{proof}
Let $R = \{v_1, v_3, v_5, \ldots, v_k\}$, where $k = n - 1$,  when $n$ is even, and $k = n$, when $n$ is odd. Let $R' = \{v_{4l+2}' : 0 \leq l \leq \lfloor \frac{n-3}{4} \rfloor\}$. If $|R|$ is even, then the last vertex in $R'$ is $v_{k-1}'$. If $|R|$ is odd, then the last vertex in $R'$ is $v_{k-3}'$, in which case we further add $v_{k-1}'$ to $R'$. Then, it is straightforward to verify that $S = R \cup (V(P_n') \setminus R')$ is a mutual-visibility set of $M(P_n)$. Since $|R| = \lceil \frac{n}{2} \rceil$ and $|R'| = \lceil \frac{1}{2}\lceil \frac{n}{2} \rceil \rceil$, we have $|S| = \lceil \frac{n}{2} \rceil + n-\lceil \frac{1}{2}\lceil \frac{n}{2} \rceil \rceil = n +  \lfloor \frac{n+1}{4}\rfloor$. Thus, $\mu(M(P_n)) \geq  n +  \lfloor \frac{n+1}{4}\rfloor$. 

In the rest of the proof we need to show that $\mu(M(P_n)) \le n +  \lfloor \frac{n+1}{4}\rfloor$. We first show the conclusion holds if some $\mu$-set of $M(P_n)$ contains $v^*$. Let hence $N \cup N'$ be a $\mu$-set of $M(P_n)$, where $N \subseteq V(P_n)$, $N' \subseteq V(P_n') \cup \{v^*\}$, and $v^* \in N'$. Since $v^*$ is in the unique shortest path connecting $u'$ and $v'$, where $d_{P_n}(u,v) \neq 2$, at most two vertices from $V(P_n')$ are in $N'$. Also, since $v^*$ is in the unique shortest path connecting $u$ and $v$, where $d_{P_n}(u,v) \geq 5$, at most four vertices from $V(P_n)$ are in $N$. Hence we have $|N \cup N'| \leq 7$ which proves the assertion for $n \geq 7$. For $n = 5, 6$, using similar arguments we can prove that if $v^* \in N'$ then $|N \cup N'| \leq 5$ and $|N \cup N'| \leq 6$, respectively. 

In the following we may thus reduce our attention to $\mu$-sets which do not contain the vertex $v^*$. 
	
\medskip\noindent
{\bf Claim}: There exists a $\mu$-set $S \cup S'$ of $M(P_n)$, where $S \subseteq V(P_n)$, $S' \subseteq V(P_n')$, such that $S$ is an independent set.\\
Let $N \cup N'$ be a $\mu$-set of $M(P_n)$, where $N \subseteq V(P_n)$ and $N' \subseteq V(P_n')$. If $N$ is independent, there is nothing to prove. Otherwise, proceed as follows to replace the vertices in the $\mu$-set so as to make a new mutual-visibility of the same cardinality and which is independent restricted to $P_n$. The construction is distinguished according to the following situations.
	
Assume first that three consecutive vertices of $P_n$ lie in $N$. If  $v_{k-1}, v_{k}, v_{k+1} \in N$, where $2 < k < n-2$, then none of the vertices $v_{k-2}$, $v_{k-2}'$, $v_{k-1}'$, $v_{k}'$, $v_{k+1}'$, $v_{k+2}$, $v_{k+2}'$ lies in $N \cup N'$, see Fig.~\ref{fig:fig6}(a). Then we infer that $(N \cup N' \cup \{v_{k}'\})\setminus\{v_{k}\}$ is a mutual-visibility set of $M(P_n)$ and hence a $\mu$-set of $M(P_n)$. If $v_{1}, v_{2}, v_{3} \in N$, then none of $v_{1}'$, $v_{2}'$, $v_{3}'$, $v_{4}$, $v_{4}'$ lies in $N \cup N'$. In this case we see that $(N \cup N' \cup \{v_{1}'\})\setminus\{v_{2}\}$ is a mutual-visibility set of $M(P_n)$. Similarly, if $v_{n-2}, v_{n-1}, v_{n} \in N$, then $(N \cup N' \cup \{v_{n}'\})\setminus\{v_{n-1}\}$ is a mutual-visibility set of $M(P_n)$. We have this seen that $N \cup N'$ can be modified in such a way that no three consequent vertices from $P_n$ are in $N$.
	
	\begin{figure}[ht!]
		\centering
		\includegraphics[width=0.9\linewidth]{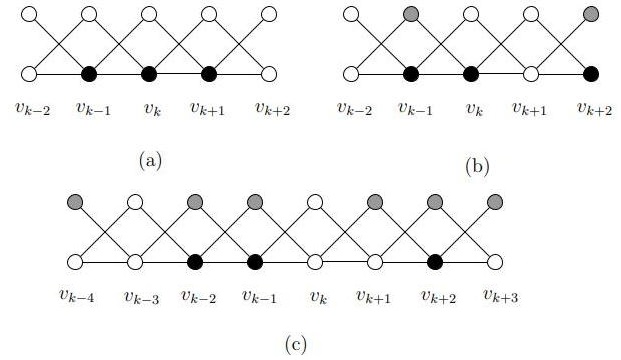}
		\caption{Situations from the proof of Theorem~\ref{thm:mu-of-mycielskian-graphs-of-paths}. The black vertices denote the vertices in the mutual-visibility set, the grey vertices are those whose status is not known and the white vertices are those which cannot be present in the mutual-visibility set.}
		\label{fig:fig6}
	\end{figure}
	
Assume next that $v_{k-1}, v_k, v_{k+2} \in N$, where $2 < k < n-2$. Then the vertices $v_{k-2}$, $v_{k-2}'$, $v_{k}'$, $v_{k+1}$, $v_{k+1}'$ do no belong to $N \cup N'$, cf.\ Fig.~\ref{fig:fig6}(b), where $v_{k-2}$ and $v_{k+1}$ are not present by the above modification. Then $(N \cup N' \cup \{v_{k}'\})\setminus\{v_{k}\}$ is a mutual-visibility set of $M(P_n)$. If $v_{1}, v_{2}, v_{4} \in N$, then $v_{1}', v_{2}', v_{3}, v_{3}' \notin N \cup N'$. Then $(N \cup N' \cup \{v_{1}'\})\setminus\{v_{2}\}$ is a mutual-visibility set of $M(P_n)$. Similarly, if $v_{n-3}, v_{n-1}, v_{n} \in N$, then $(N \cup N' \cup \{v_{n}'\})\setminus\{v_{n-1}\}$ is a mutual-visibility set of $M(P_n)$. We can thus further modify $N \cup N'$ in such a way that no three vertices from $P_n$ of the form $v_{k-1}, v_k, v_{k+2}$ are in $N$.
	
In the third case assume that $v_{k-2}, v_{k-1}, v_{k+2} \in N$, where $ k \neq 3, n-2$. Then $v_{k-3}', v_{k}' \notin N \cup N'$. Moreover, we also infer  that by the above modifications, $v_{k-3}, v_{k}, v_{k+1} \notin N$, see Fig.~\ref{fig:fig6}(c). Then $(N \cup N'\cup \{v_{k}, v_{k}'\})\setminus\{v_{k-1}, v_{k + 1}', v_{k+3}\}$ is a mutual-visibility set of $M(P_n)$. (Note that only one among $v_{k+1}'$ and $v_{k+3}$ will be present initially in $N \cup N'$ which implies that the cardinality of the mutual-visibility set remains the same.)  If $v_{1}, v_{2}, v_{5} \in N$, then $v_{1}', v_{2}', v_{3}, v_{3}', v_{4} \notin N \cup N'$. Then $(N \cup N' \cup \{v_3, v_{3}'\})\setminus\{v_{4'}, v_6\}$ is a mutual-visibility set of $M(P_n)$. (Note that only one among $v_{4}'$ and $v_{6}$ will be present initially in $N \cup N'$.) Similarly, if $v_{n-4}, v_{n-1}, v_{n} \in N$, then $(N \cup N' \cup \{v_{n-2}, v_{n-2}'\})\setminus\{v_{n-1}, v_{n-3}', v_{n-5}\}$ is a mutual-visibility set of $M(P_n)$. (Note that only one among $v_{n-3}'$ and $v_{n-5}$ will be present initially in $N \cup N'$.) 

In the last case to be considered assume that $v_{k-2}, v_{k-1}, v_{k+l} \in N$ for some $l \geq 3$. Then $v_{k}, v_{k}', v_{k+1}, v_{k+2} \notin N \cup N'$. In addition, $v_{k-5}, v_{k-4}, v_{k-3}, v_{k-3}'\notin N\cup N'$, if those vertices are present in the graph. Then $(N \cup \{v_k\}) \setminus \{v_{k-1}\}$ is a mutual-visibility set of $M(P_n)$. Thus modify $N \cup N'$ in such a way that no three vertices from $P_n$ of this form are in $N$.
	
If $N$ has only two vertices and they are adjacent, then at least two vertices from $P_n'$ are not in $N'$, in which case, $|N \cup N'| \leq n$. This is not possible since $\mu(M(P_n)) \geq  n +  \lfloor \frac{n+1}{4}\rfloor > n$. This proves the claim. 
	
\medskip	

We have thus proved that there exists a $\mu$-set $S \cup S'$ of $M(P_n)$, where $S \subseteq V(P_n)$, $S' \subseteq V(P_n')$, such that $S$ is an independent set.	Now, if there are two vertices $u$ and $v$ in $S$ such that $d_G(u,v) \geq 5$, then each of $u$ and $v$ must have a neighbor in $V(P_n')$ which is not in $S'$ in order that $u$ are $v$ are visible. Also, if there are two vertices $u$ and $v'$ in $S$ such that $d_G(u,v) = 4$, then $u$ must have a neighbor in $V(P_n')$ which is not in $S'$ in order that $u$ are $v'$ are visible. Such a vertex from $V(P_n')\setminus S'$ can be used or shared by at most two vertices from $S$. Hence, the cardinality of $S \cup S'$ will be maximum when the vertices $v_1, v_3, v_5, \ldots$ are included into $S$ and the vertices $v_2',v_6', v_{10}',\dots$ from $V(P_n')$ are excluded from $S'$. Hence, $\mu(M(P_n)) \leq \lceil \frac{n}{2} \rceil + n-\lceil \frac{1}{2}\lceil \frac{n}{2} \rceil \rceil =  n +  \lfloor \frac{n+1}{4}\rfloor$ and we are done. 
\end{proof}

\begin{theorem} \label{thm:mu-of-mycielskian-graphs-of-cycles}
If $n\ge 8$, then $\mu(M(C_n)) =  n + \lfloor \frac{n}{4} \rfloor$.
\end{theorem}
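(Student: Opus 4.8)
The plan is to mirror the structure of the proof of Theorem~\ref{thm:mu-of-mycielskian-graphs-of-paths}, since the Mycielskian of a cycle differs from that of a path only by the wrap-around adjacencies. First I would establish the lower bound $\mu(M(C_n)) \ge n + \lfloor \frac{n}{4}\rfloor$ by exhibiting an explicit mutual-visibility set. The natural candidate is $S = R \cup (V(C_n') \setminus R')$, where $R \subseteq V(C_n)$ is an independent set of alternating vertices $v_1, v_3, v_5, \ldots$ (so $|R| = \lfloor n/2\rfloor$) and $R'$ is a sparse subset of the shadow vertices chosen so that each vertex of $R$, and each antipodal-distance pair, retains a private neighbor in $V(C_n') \setminus S'$ to act as an internal vertex for a shortest path through $V(C_n')$. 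The counting $|R| + (n - |R'|)$ should come out to $n + \lfloor n/4\rfloor$; because a cycle has no endpoints, the indexing is cleaner than in the path case and I expect $|R'| = \lceil |R|/2\rceil$ to land exactly.

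For the upper bound I would again split off the case where a $\mu$-set contains the apex vertex $v^*$. Since $v^*$ lies on the unique shortest $u',v'$-path whenever $d_{C_n}(u,v) \ne 2$, and on the unique shortest $u,v$-path whenever $d_{C_n}(u,v)$ is large, only a bounded number of vertices from each of $V(C_n)$ and $V(C_n')$ can accompany $v^*$; this caps such sets at a constant, which is dominated by $n + \lfloor n/4\rfloor$ once $n\ge 8$. One must be slightly careful here: on a cycle the shortest path between antipodal or near-antipodal vertices need not be unique, so I would argue the bound using those pairs for which uniqueness does hold, exactly as in the path argument.

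The heart of the matter is then the independence-reduction claim: that there is a $\mu$-set $S \cup S'$ with $S \subseteq V(C_n)$ independent. I would transplant the four-case local-replacement argument from the path proof, where configurations of three nearly-consecutive vertices $v_{k-1},v_k,v_{k+1}$, or $v_{k-1},v_k,v_{k+2}$, or $v_{k-2},v_{k-1},v_{k+2}$, or a longer-range triple, are each resolved by swapping a $P_n$-vertex for its shadow while preserving cardinality and mutual-visibility. Once $S$ is independent, the final counting step shows the configuration is optimized exactly when $S$ takes the alternating form and $S'$ omits one in every four shadow vertices, matching the lower bound. The main obstacle will be handling the cyclic wrap-around: the boundary sub-cases in the path proof (triples anchored at $v_1$ or $v_n$) have no analogue, but conversely the periodicity means a reduction applied near the ``seam'' of the cycle can interact with vertices at the far end, so I would need to verify that the local replacements remain valid modulo $n$ and that no global conflict is introduced. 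A secondary subtlety, requiring the hypothesis $n\ge 8$, is that for small cycles the geodesics are too short for the distance-based visibility arguments to bite; I would treat the reduction uniformly for $n\ge 8$ where $\diam(C_n) = \lfloor n/2\rfloor \ge 4$ guarantees enough room.
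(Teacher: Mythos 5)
Your skeleton matches the paper's (explicit lower-bound set, separate treatment of $\mu$-sets containing $v^*$, a structural claim proved by local replacements, then a counting step), and your lower bound and $v^*$-case arguments agree with the paper's. The genuine gap is in the heart of the matter, the structural claim. You propose to prove that some $\mu$-set $S\cup S'$ has $S$ independent by ``transplanting'' the four local replacements from the proof of Theorem~\ref{thm:mu-of-mycielskian-graphs-of-paths}; the paper does not do this, and for good reason. It proves a strictly \emph{weaker} claim: if $v_k,v_{k+1}\in S$ then $v_{k-2},v_{k-1},v_{k+2},v_{k+3}\notin S$, i.e.\ adjacent pairs are allowed but must be isolated, and its case analysis is genuinely different from the path one (four-vertex configurations such as $v_{k-3},v_{k-1},v_k,v_{k+2}$ and $v_{k-4},v_{k-1},v_k,v_{k+2}$, with subcases depending on whether $v_{k-3}'$ or $v_{k-5}'$ lies in $S'$). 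The path replacements lean on one-sided/endpoint structure; for instance, the path case ``pair plus a far vertex'' derives exclusions like $v_{k-5},v_{k-4},v_{k-3},v_{k-3}'\notin N\cup N'$ ``if those vertices are present in the graph'', which has no analogue on a cycle, where the third vertex of a configuration is simultaneously near the pair in the other direction. You acknowledge the wrap-around issue but give no mechanism for resolving it, and the configuration your stronger claim must eliminate — an isolated adjacent pair $v_k,v_{k+1}$ with nothing of $S$ nearby — is precisely the one none of the four path cases addresses; breaking it requires a different replacement target and new visibility verifications. The weaker claim suffices for the final count because each isolated pair forces the two primed vertices $v_{k-1}',v_{k+2}'$ out of $S'$, which is what the paper exploits in its closing count; your count, based on full independence, would first need the independence reduction you have not supplied.

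A second concrete flaw: treating the reduction ``uniformly for $n\ge 8$'' because $\diam(C_n)\ge 4$ gives enough room does not work. In the paper's proof, essentially every replacement step needs separate ad hoc arguments for $n=8$ and $n=9$ (deriving contradictions $|N\cup N'|\le 7$, resp.\ $|N\cup N'|\le 10$), precisely because for these lengths $v_{k+4}$ and $v_{k-4}$ coincide or are adjacent, vertices at distance $4$ have two shortest cycle paths, and the distance-$5$ visibility arguments through $v^*$ are unavailable. Any correct proof along these lines must isolate and handle $n=8,9$ specially; a uniform argument of the kind you describe breaks exactly there.
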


\begin{proof}
	Let $R \subseteq V(C_n)$ be an independent set of maximum cardinality of $C_n$ and let $R'$ be a smallest set of vertices from $V(C_n')$ which dominate all the vertices from $R$. Then it is straightforward to verify that $R \cup (V(C_n') \setminus R')$ is a mutual-visibility set of $M(C_n)$. Since $|R| = \lfloor \frac{n}{2} \rfloor$ and $|R'| = \lceil \frac{1}{2}\lfloor \frac{n}{2} \rfloor \rceil$ we have $|R \cup (V(C_n') \setminus R')| = \lfloor \frac{n}{2} \rfloor + n-\lceil \frac{1}{2}\lfloor \frac{n}{2} \rfloor \rceil  = n + \lfloor \frac{n}{4} \rfloor$ which in turn implies that $\mu(M(C_n)) \geq  n + \lfloor \frac{n}{4} \rfloor$. 
	
To prove that $\mu(M(C_n)) \leq n + \lfloor \frac{n}{4} \rfloor$, we first show that the inequality holds if some $\mu$-set of $M(C_n)$ contains $v^*$. So let $N \cup N'$ be a $\mu$-set of $M(C_n)$, where $N \subseteq V(C_n)$, $N' \subseteq V(C_n') \cup \{v^*\}$, and $v^*\in N'$. Since $v^* \in N'$, at most two vertices from $V(C_n')$, say $v_k'$ and $v_{k+2}'$, are in $N'$. If $n \in \{8, 9\}$, it can easily be verified that $|N \cup N'| \leq 8$. For $n \geq 10$, at most five vertices from $V(C_n)$ are in $N$, since no two vertices $u, v \in V(C_n)$ such that $d_{C_n}(u,v) \geq 5$, are visible. So in any case the claimed inequality holds. In the rest we may thus assume that the $\mu$-sets of $M(C_n)$ do not contain $v^*$. We also assume that indices are computed modulo $n$. 

\medskip\noindent
{\bf Claim}: There exists a $\mu$-set $S \cup S'$ of $M(C_n)$, where $S \subseteq V(C_n)$ and $S' \subseteq V(C_n')$, such that if $v_k, v_{k+1} \in S$, then $v_{k-2}, v_{k-1}, v_{k+2}, v_{k+3} \notin S$.

Let $N \cup N'$ be a $\mu$-set of $M(C_n)$, where $N \subseteq V(C_n)$ and $N' \subseteq V(C_n')$. Then we are going to modify $N \cup N'$ such that the modified $\mu$-set of $M(C_n)$ will satisfy the condition of the claim. To this end, we distinguish a few cases. 

Assume first that $v_{k-1}, v_{k}, v_{k+1} \in N$. In this case, $v_{k-2}$, $v_{k-2}'$, $v_{k-1}'$, $v_{k}'$, $v_{k+1}'$, $v_{k+2}$, and $v_{k+2}'$ do not lie in $N \cup N'$, just as shown in Fig.\ \ref{fig:fig6}(a). Now we consider the set $(N \cup N' \cup \{v_{k}'\})\setminus\{v_{k}\}$ and show that it is a mutual-visibility set of $M(C_n)$. If $v_{k+l} \in N$, for $l = 3$ or for any $l \geq 5$, then since $v_{k}$ and $v_{k+l}$ are visible, we get $v_{k}'$ and $v_{k+l}$ are visible. Similarly, if $v_{k-l} \in N$, for $l = 3$ or for any $l \geq 5$, we get that $v_{k}'$ and $v_{k-l}$ are visible. Now, if $n \geq 10$, then $v_k'$ and $v_{k+4}$ are visible since $v_{k-1}$ and $v_{k+4}$ are visible. Similarly, $v_k'$ and $v_{k-4}$ are visible because $v_{k-1}$ and $v_{k-4}$ are visible. It can be verified directly that for $n=8$, the above possibility along with $v_{k-4} \in N$ or $v_{k+4} \in N$ implies $|N\cup N'| \leq 7$, which is a contradiction. Similarly, for $n=9$, the above possibility along with $v_{k-4} \in N$ or $v_{k+4} \in N$ implies $|N\cup N'| \leq 10$, which is a contradiction. 

	\begin{figure}[ht!]
		\centering
		\includegraphics[width=0.8\linewidth]{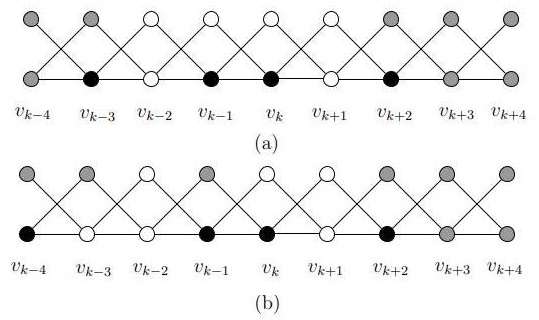}
		\caption{Situations from the proof of Theorem~\ref{thm:mu-of-mycielskian-graphs-of-cycles}. The black vertices again denote the vertices in the mutual-visibility set, the grey vertices are those whose status is not known and the white vertices are those which cannot be present in the mutual-visibility set.}
		\label{fig:fig7}
	\end{figure}
	
Assume second that $v_{k-3}, v_{k-1}, v_k, v_{k+2} \in N$. In this case, $v_{k-2}$, $v_{k-2}'$, $v_{k-1}'$, $v_{k}'$, $v_{k+1}$, and $v_{k+1}'$ do not belong to $N \cup N'$, see Fig.~\ref{fig:fig7}(a). We now consider $(N \cup N' \cup \{v_{k}'\})\setminus\{v_{k}\}$ and assert that it is a mutual-visibility set of $M(C_n)$. The vertices $v_{k}'$ and $v_{k+2}$ are visible since $v_{k+1} \notin N$. If $v_{k+l} \in N$, for $l = 3$ or for any $l \geq 5$ then, since $v_{k}$ and $v_{k+l}$ are visible, we get that $v_{k}'$ and $v_{k+l}$ are visible. Similarly, if $v_{k-l} \in N$, for $l = 3$ or for any $l \geq 5$, we get that $v_{k}'$ and $v_{k-l}$ are visible. If $v_{k-4} \in N$, then either $v_{k-5}'$ or $v_{k-3}'$ not in $N'$. Hence, $v_k'$ and $v_{k-4}$ are visible. If $n \geq 10$, then $v_k'$ and $v_{k+4}$ are visible since $v_{k-1}$ and $v_{k+4}$ are visible. It can be directly verified that for $n=9$, the above possibility along with $v_{k+4} \in N$ implies $|N\cup N'| \leq 10$. 
	
Assume next that $v_{k-4}, v_{k-1}, v_k, v_{k+2} \in N$. In this case, $v_{k-3}$, $v_{k-2}$, $v_{k-2}'$, $v_{k}'$, $v_{k+1}$, $v_{k+1}'$ do not lie in $N \cup N'$, cf.\ Fig.~\ref{fig:fig7}(b)). We now consider two subcases. 

In the first subcase assume that $v_{k-3}' \notin N'$ or $v_{k-5}' \notin N'$. Then we assert that $(N \cup N' \cup \{v_{k}'\})\setminus\{v_{k}\}$ is a mutual-visibility set of $M(C_n)$. The vertices $v_{k}'$ and $v_{k+2}$ are visible since $v_{k+1} \notin N$. If $v_{k+l} \in N$, for $l = 3$ or for any $l \geq 5$, then, since $v_{k}$ and $v_{k+l}$ are visible, we get $v_{k}'$ and $v_{k+l}$ are visible. Similarly, if $v_{k-l} \in N$, for $l = 3$ or for any $l \geq 5$, we get $v_{k}'$ and $v_{k-l}$ are visible. Also, $v_k'$ and $v_{k-4}$ are visible since $v_{k-3}' \notin N'$ or $v_{k-5}' \notin N'$. Now, for $n \geq 10$, the vertices $v_k'$ and $v_{k+4}$ are visible since $v_{k-1}$ and $v_{k+4}$ are visible. It can be easily verified that for $n=9$, the above possibility along with $v_{k+4} \in N$ implies $|N\cup N'| \leq 10$
	
In the second subcase assume that $v_{k-5}', v_{k-3}'\in N'$. Then $v_{k-1}' \notin N'$, since $v_k$ and $v_{k-4}$ are visible. We now asserts that $(N \cup N' \cup \{v_{k-1}'\})\setminus\{v_{k-1}\}$ is a mutual-visibility set of $M(C_n)$. If $v_{k+l} \in N$, for $l = 2$ or for any $l \geq 4$, then, since $v_{k-1}$ and $v_{k+l}$ are visible, we get that $v_{k-1}'$ and $v_{k+l}$ are visible. Similarly, if $v_{k-l} \in N$, for $l = 4$ or for any $l \geq 6$, we find that $v_{k-1}'$ and $v_{k-l}$ are visible. Also, $v_k'$ and $v_{k+3}$ are visible since $v_{k+2}' \notin N'$ or $v_{k+4}' \notin N'$. Similarly, $v_k'$ and $v_{k-5}$ are visible since $v_{k-4}' \notin N'$ or $v_{k-6}' \notin N'$. The claim is proved. 

\medskip
We have thus proved that there exists a $\mu$-set $S \cup S'$ of $M(C_n)$, where $S \subseteq V(C_n)$ and $S' \subseteq V(C_n')$, such that if $v_k, v_{k+1} \in S$, then $v_{k-2}, v_{k-1}, v_{k+2}, v_{k+3} \notin S$. We are now going to show that for this set we have $|S \cup S'| \leq \lfloor \frac{n}{2} \rfloor + n-\lceil \frac{1}{2}\lfloor \frac{n}{2} \rfloor \rceil$. If $v_k, v_{k+1} \in S$, then $v_{k-2}$, $v_{k-1}$, $v_{k+2}$, $v_{k+3}$ do not belong to $S$. Also, $v_{k-1}', v_{k+2}' \notin S'$. Now, if $S' = V(C_n')$, then $S = \emptyset$, for $n \geq 8$. If $S' = V(C_n') \setminus \{v_2'\}$, then $S = \{v_1, v_3\}$. If $S' = V(C_n') \setminus \{v_2', v_6'\}$, then $S = \{v_1, v_3, v_5, v_7\}$. This process can be continued until the $\lceil \frac{1}{2}\lceil \frac{n}{2} \rceil \rceil$ vertices $v_2',v_6', v_{10}',\ldots$ are excluded from $V(C_n')$ so that the $\lfloor \frac{n}{2} \rfloor$ vertices $v_1, v_3, v_5,\ldots$ can be included into $S$. Hence $\mu(M(C_n)) \le \lfloor \frac{n}{2} \rfloor + n-\lceil \frac{1}{2}\lfloor \frac{n}{2} \rfloor \rceil =  n + \lfloor \frac{n}{4} \rfloor$ and we are done.
\end{proof}

\begin{proposition}\label{28}
	If $G$ is a graph with $n(G)\ge 2$ and with a universal vertex, then $\mu(M(G)) = 2n(G)-1$.
\end{proposition}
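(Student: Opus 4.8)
The plan is to exploit the fact that, when $G$ has a universal vertex $w$, the Mycielskian $M(G)$ has diameter $2$. Indeed, within $V(G)$ all distances are at most $\diam(G) = 2$; every $u'\in V(G')$ is adjacent to $v^*$ and, since $w$ is universal, lies at distance at most $2$ from every other vertex; and $v^*$ is at distance $2$ from each vertex of $V(G)$ via any copy vertex. First I would verify these distances carefully, concluding $\diam(M(G)) = 2$. This is decisive because in a diameter-two graph a set $S$ is a mutual-visibility set if and only if every pair of non-adjacent vertices of $S$ has a common neighbour lying in $V(M(G))\setminus S$, adjacent pairs being automatically visible through their connecting edge. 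All subsequent reasoning reduces to checking this single condition.

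For the lower bound I would take $S = (V(G)\setminus\{w\})\cup V(G')$, which has $(n(G)-1)+n(G) = 2n(G)-1$ vertices and omits exactly $w$ and $v^*$. Using the characterization, I would run through the types of non-adjacent pairs in $S$ and exhibit a common neighbour outside $S$ in each: two copy vertices $u',v'$ share $v^*$; a vertex $u\in V(G)\setminus\{w\}$ and its copy $u'$ share $w$ (since $w\in N_G(u)$ and $wu'\in E(M(G))$); two non-adjacent vertices of $V(G)\setminus\{w\}$ share $w$; and a non-adjacent pair $u,v'$ again shares $w$, after observing that $v\neq w$ forces $w\in N_G(u)\cap N_G(v)$. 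As $w,v^*\notin S$, this shows $S$ is a mutual-visibility set, hence $\mu(M(G)) \ge 2n(G)-1$.

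For the upper bound I would show that neither $|S| = 2n(G)+1$ nor $|S| = 2n(G)$ is attainable. The full set $V(M(G))$ fails immediately, since e.g. the pair $u,u'$ has all its common neighbours (namely $N_G(u)\subseteq V(G)$) inside $S$. If $|S| = 2n(G)$, exactly one vertex $z$ is omitted, and I would split into three cases. If $z = v^*$, then any pair $u,u'$ again has all common neighbours in $V(G)\subseteq S$. If $z = u_0\in V(G)$, then for any $v\neq u_0$ the pair $u_0',v'$ has only $v^*\in S$ and $N_G(u_0)\cap N_G(v)\subseteq V(G)\setminus\{u_0\}\subseteq S$ as common neighbours, using $u_0\notin N_G(u_0)$. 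If $z = u_0'\in V(G')$, then for any $u\neq u_0$ the pair $u,u'$ has all common neighbours in $V(G)\subseteq S$. Each case produces a non-visible pair, so $|S|\le 2n(G)-1$, and equality follows.

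The routine but delicate part is the diameter-two verification together with the bookkeeping of common neighbours, where the universality of $w$ and the non-adjacency of $v^*$ to $V(G)$ must be invoked repeatedly. The genuine content lies in confirming that the three omission cases of the upper bound are exhaustive and that in each case a violating pair is guaranteed to exist once $n(G)\ge 2$.
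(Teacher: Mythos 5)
Your proof is correct, and while the lower bound coincides with the paper's, your upper bound is organized along genuinely different lines. For the lower bound, both you and the paper take the set $(V(G)\setminus\{w\})\cup V(G')$; the paper only asserts the verification is straightforward, whereas you carry it out explicitly after first proving $\diam(M(G))=2$ and reducing mutual-visibility to the condition that non-adjacent pairs in $S$ have a common neighbour outside $S$. For the upper bound, the paper argues about an \emph{arbitrary} mutual-visibility set $S$ by cases on $S\cap V(G)$: if $V(G)\subseteq S$ then $S\cap V(G')=\emptyset$; if $S\cap V(G)=\emptyset$ then $|S|\le n(G)+1$; and if $1\le |S\cap V(G)|\le n(G)-1$, then either $v^*\notin S$ and counting finishes, or $v^*\in S$ forces $|S\cap V(G')|\le n(G)-1$. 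You instead exploit that $|V(M(G))|=2n(G)+1$, so only the sizes $2n(G)$ and $2n(G)+1$ must be excluded, and you refute each of the four near-full sets (no vertex omitted, or $z\in\{v^*\}\cup V(G)\cup V(G')$ omitted) by exhibiting a concrete non-visible pair; this enumeration is exhaustive precisely because the complement has size at most one, and each violating pair exists because $n(G)\ge 2$. Both routes ultimately rest on the same structural facts --- the common neighbours of $u$ and $u'$ are exactly $N_G(u)\subseteq V(G)$, and those of $u'$ and $v'$ are $(N_G(u)\cap N_G(v))\cup\{v^*\}$ --- but your version makes every failure explicit and is self-contained via the diameter-two reduction, while the paper's intersection-size bookkeeping avoids any appeal to the diameter and applies uniformly without enumerating complements. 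One small point of care in your write-up: in the lower-bound case of a non-adjacent pair $u,v'$, the correct logic is that non-adjacency of $u$ and $v'$ forces $v\ne w$ (if $v=w$ they would be adjacent), and $u\ne w$ holds since $w\notin S$; then $w$ is the desired common neighbour. Your phrasing inverts this slightly, but the mathematics is sound.
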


\begin{proof}
	Let $v$ be a universal vertex of $G$. Then it is straightforward to verify that $(V(G)\setminus\{v\}) \cup V(G')$ is a mutual-visibility set of $M(G)$ which implies that $\mu(M(G)) \ge 2n(G)-1$. 
	
Let $S$ be an arbitrary mutual-visibility set of $M(G)$. If $V(G) \subseteq S$, then $S \cap V(G') = \emptyset$, hence in this case $|S| \le n(G) + 1 \le 2n(G)-1$. The same conclusion holds when $S\cap V(G) = \emptyset$. Assume in the rest that $1\le |S\cap V(G)| \le n(G)-1$. Then if $v^*\notin S$, we immediately get $|S| \le 2n(G)-1$ and if $v^*\in S$, then $|S\cap V(G')| \le n(G) - 1$, and we obtain the same conclusion. In any case $\mu(M(G)) \ge 2n(G)-1$. 
\end{proof}

\begin{theorem} \label{thm:diam3}
If $G$ is not a complete graph and $\diam(G) \leq 3$, then 
$$n(G) + \muo(G) \leq \mu (M(G)) \leq n + \mu(G) + 1\,.$$ 
Moreover, if $\mu (M(G)) = n(G) + \mu(G) + 1$, then every $\mu$-set of $M(G)$ contains $v^*$.
\end{theorem}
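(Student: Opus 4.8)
The plan is to prove the two inequalities separately and then extract the structural consequence about $v^*$. For the lower bound $n(G) + \muo(G) \leq \mu(M(G))$, I would take a $\muo$-set $T$ of $G$ and show that $T \cup V(G')$ is a mutual-visibility set of $M(G)$. The intuition is that the copy vertices $V(G')$ together with the outer-visibility structure of $T$ should give enough visible pairs. I would need to check visibility for four kinds of pairs: two vertices of $T$ in $V(G)$, two vertices in $V(G')$, a vertex of $T$ with a vertex in $V(G')$, and pairs where one vertex is the image in $V(G')$ of a vertex of $T$. Here the diameter-at-most-three hypothesis is crucial, because distances in $M(G)$ are controlled by distances in $G$ (any two $G'$-vertices are at distance at most two via $v^*$, and $v^*$ can serve as an internal detour), so the shortest paths can be routed to avoid $T$. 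The outer mutual-visibility of $T$ is exactly what guarantees that a $T$-vertex and a non-$T$ vertex remain visible, which is the property that fails for an ordinary $\mu$-set; this is why $\muo$ rather than $\mu$ appears in the lower bound.

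For the upper bound $\mu(M(G)) \leq n(G) + \mu(G) + 1$, let $S$ be a $\mu$-set of $M(G)$ and write $S_0 = S \cap V(G)$, $S_1 = S \cap V(G')$, with possibly $v^* \in S$. The key step is to show that $S_1$, pushed down to $V(G)$ via the bijection $u' \mapsto u$, forms a mutual-visibility set of $G$, so that $|S_1| \leq \mu(G)$. This should follow because a shortest path between two $G'$-vertices in $M(G)$ that avoids $S$ projects, or can be reconstructed, into a short $G$-path avoiding the corresponding set — again using $\diam(G) \leq 3$ to keep all relevant distances small and to ensure the $M(G)$-geodesics stay in a controlled region. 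Then $|S| = |S_0| + |S_1| + [v^* \in S] \leq n(G) + \mu(G) + 1$, where $|S_0| \leq n(G)$ trivially. I would need to be careful that the presence or absence of $v^*$ in $S$ does not corrupt the projection argument for $S_1$.

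The final structural claim is that equality $\mu(M(G)) = n(G) + \mu(G) + 1$ forces $v^* \in S$ for every $\mu$-set $S$. The plan is to argue the contrapositive: if some $\mu$-set $S$ omits $v^*$, then the $+1$ term cannot be realized, so the bound $|S| \leq n(G) + \mu(G)$ holds, contradicting equality. Concretely, when $v^* \notin S$, I expect to squeeze the estimate from the upper-bound argument: either $|S_0| < n(G)$ or $|S_1| < \mu(G)$ must hold, because having both $S_0 = V(G)$ and $S_1$ a full $\mu(G)$-set simultaneously visible in $M(G)$ without the mediating vertex $v^*$ over-constrains the configuration. I would look for a specific pair of vertices whose only $S$-avoiding geodesic must pass through $v^*$, so that excluding $v^*$ from $S$ while keeping the count maximal is impossible unless some other vertex is lost.

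The main obstacle I anticipate is the careful verification of shortest paths in $M(G)$ for the lower-bound construction, particularly pairs of the form $u \in T$ and $u' \in V(G')$, and pairs of two $V(G')$-vertices whose $G$-distance is two: in $M(G)$ such $G'$-vertices are at distance two both through $v^*$ and possibly through a common neighbor in $V(G)$, and I must ensure at least one such geodesic avoids $T$. Establishing the exact distance function of $M(G)$ under $\diam(G) \leq 3$ — distinguishing when a geodesic between two $G'$-vertices goes through $v^*$ versus through $V(G)$ — is the technical heart of the argument and where the $\muo$-property and the diameter bound must be combined most delicately.
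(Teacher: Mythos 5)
Your lower bound argument is sound and is exactly the paper's: take a $\muo$-set $T$ of $G$ and verify that $T\cup V(G')$ is a mutual-visibility set of $M(G)$, the hypothesis $\diam(G)\le 3$ ensuring the required routings exist. No issue there.

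The upper bound, however, has a fatal gap. Your key step --- that $S_1=S\cap V(G')$, pushed down to $V(G)$ via $u'\mapsto u$, is a mutual-visibility set of $G$, hence $|S_1|\le\mu(G)$ --- is false. Any two primed vertices $u',v'$ are nonadjacent and both adjacent to $v^*$, so $d_{M(G)}(u',v')=2$ and the path $u'\!-\!v^*\!-\!v'$ has no internal vertex other than $v^*$; consequently, whenever $v^*\notin S$, \emph{every} subset of $V(G')$, including all of $V(G')$, is pairwise $S$-visible, with no constraint whatsoever coming from the geometry of $G$. Indeed your own lower-bound construction has $S_1=V(G')$, of size $n(G)>\mu(G)$ since $G$ is not complete, so your two halves contradict each other. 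The decomposition must go the other way: $|S\cap V(G')|\le n(G)$ is the trivial estimate, and it is $S\cap V(G)$ that needs control --- but it too need not be a mutual-visibility set of $G$, since $u,v\in S\cap V(G)$ with $d_G(u,v)=2$ can be $S$-visible in $M(G)$ via $u\!-\!x'\!-\!v$ even when every common neighbour $x$ lies in $S$. The paper resolves this with an exchange argument: because $\diam(G)\le 3$, an $M(G)$-geodesic between two $G$-vertices is a $G$-geodesic with at most one internal vertex replaced by its primed copy; hence if $u,v\in S\cap V(G)$ are $S$-visible in $M(G)$ but not $(S\cap V(G))$-visible in $G$, there is a vertex $x$ on a shortest $u,v$-path in $G$ with $x\in S$ but $x'\notin S$. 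Iterating, $|S\cap V(G)|\le\mu(G)+k$ is paid for by $k$ distinct primed vertices missing from $S\cap V(G')$, so $|S|\le n(G)+\mu(G)+1$ in all cases. The ``moreover'' statement then follows from the same bookkeeping: if $v^*\notin S$ and $|S\cap V(G)|=\mu(G)+k$ with $k\ge 1$, the exchange forces $|S\cap V(G')|\le n(G)-k$, contradicting $|S|=n(G)+\mu(G)+1$. Your version of this final claim rests on the false projection step (``either $|S_0|<n(G)$ or $|S_1|<\mu(G)$''), so it inherits the gap and cannot be repaired without switching to something like the exchange argument.
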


\begin{proof}
Let $M$ be a $\muo$-set of $G$. Then, having in mind that $\diam(G)\le 3$,  it is straightforward to verify that $M \cup V(G')$ is a mutual-visibility set of $M(G)$ and therefore $\mu (M(G)) \geq n + \muo(G)$. 
	
Let $S$ be a $\mu$-set of $M(G)$. Assume first that $S \cap V(G)$ is a mutual-visibility set of $G$. Then $|S \cap V(G)| \leq \mu(G)$ and hence, $|S| \leq n + \mu(G) + 1$. Assume second that $S \cap V(G)$ is not a mutual-visibility set of $G$. Then there exists $u, v \in S \cap V(G)$ such that $u$ and $v$ are not mutually visible in $G$ but are mutually visible in $M(G)$. Denoting by $I_G[u,v]$  the set of all vertices that lie on shortest $u,v$-paths in $G$, we then have $x \in I_G[u,v]$ such that $x \in S$ but $x' \notin S$. If $(S \setminus \{x\}) \cap V(G)$ is a mutual-visibility set of $G$ then $|S \cap V(G)| \leq \mu(G) + 1$. Hence, $|S| \leq n + \mu(G) + 1$. If $(S \setminus \{x\}) \cap V(G)$ is not a mutual-visibility set of $G$ then proceed as above, that is, at each step we detect a vertex $y'\in V(G')\setminus S$ corresponding to a vertex $y\in S$. Therefore, $\mu (M(G)) \leq n + \mu(G) + 1$.
	 
Assume now that $\mu (M(G)) = n + \mu(G) + 1$ and suppose by way of contradiction that $S$ be a $\mu$-set of $M(G)$ such that $v^*\notin S$. Then $|S \cap V(G)| = \mu(G) + k$, for some $k \geq 1$. It follows that $|S \cap V(G')| = n(G) - k + 1$. Using a parallel argument as above, we are now going to show that $|S \cap V(G')| \leq n(G) - k$, which becomes a contradiction. Since $|S \cap V(G)|$ is not a mutual-visibility set of $G$, there exists $u, v \in S \cap V(G)$ such that $u$ and $v$ are not mutually visible in $G$ but are mutually visible in $M(G)$. Then there exists $x \in I_G[u,v]$ such that $x \in S$ but $x' \notin S$. If $k \neq 1$ then $(S \cap V(G)) \setminus \{x\}$ is still not a mutual-visibility set of $G$ and hence the above process can be repeated. Thus there exists distinct vertices $x_1', \ldots , x_k'$ that are not in $S\cap V(G')$ and hence $|S \cap V(G')| \leq n(G) - k$.
\end{proof}

We now give some examples how Theorem~\ref{thm:diam3} can be applied. First, from the theorem we read that $\mu(M(P_4)) \in \{6,7\}$ and that, moreover, if $\mu(M(P_4)) = 7$, then every$\mu$-set of $M(P_4)$ contains $v^*$. But if a mutual-visibility set $S$ of $M(P_4)$ contains contains $v^*$, then we infer that $|S| \leq 5$. We can conclude that $\mu(M(P_4)) = 6$.

Let $r_1 \geq r_2 \geq 3$ and set $n = r_1 + r_2$. Using Theorem~\ref{thm:diam3} we get that $\mu(M(K_{r_1, r_2})) \in \{2n - 2, 2n - 1\}$ and that if $\mu(M(K_{r_1, r_2 })) = 2n - 1$, then every $\mu$-set of $M(K_{r_1, r_2})$ contains $v^*$. Let $S$ be an arbitrary mutual-visibility set of $M(K_{r_1, r_2})$ with $v^*\in S$, and let $uv\in E(K_{r_1, r_2})$. Then at most one among $u'$ and $v'$ can be in $S$, hence $|S| \leq n + r_1 - 1$. We conclude that $\mu(M(K_{r_1, r_2})) = 2n - 2$.

In Theorem~\ref{thm:mu-of-mycielskian-graphs-of-cycles} we have determined $\mu(M(C_n))$ for $n\ge 8$. We  now do the same for shorter cycles. By Theorem~\ref{thm:diam3}, $\mu(M(C_n)) \geq n + 2$, for $4 \leq n \leq 7$. We claim that here equality always holds. Let $S$ be a $\mu$-set of $M(C_n)$. If $v^* \in S$ then at most two vertices from $V(C_n')$, say $v_k'$ and $v_{k+2}'$, are in $S$. If $v_k', v_{k+2}' \in S$ then at least one vertex, say $v_{k+1}$, is not in $S$. Thus in this case, $|S| \leq n+ 2$. Now, suppose $v^* \notin S$. Then as in the proof of Theorem~\ref{thm:mu-of-mycielskian-graphs-of-cycles}, there exists a $\mu$-set $S$ of $M(C_n)$ such that if $v_k, v_{k+1} \in S$, then $v_{k-2}, v_{k-1}, v_{k+2}, v_{k+3} \notin S$. If $V(C_n') \not \subseteq S$, then $|S| \leq n + \lfloor \frac{n}{4} \rfloor = n$. If $V(C_n') \subseteq S$, then $S \cap V(C_n)$ is an outer mutual-visibility set of $C_n$ and hence $|S| \leq n + 2$. Therefore, $\mu(M(C_n)) = n + 2$, for $4 \leq n \leq 7$.
 
\section*{Acknowledgments}

Dhanya Roy thank Cochin University of Science and Technology for providing financial support under University JRF Scheme. Sandi Klav\v zar was supported by the Slovenian Research Agency ARIS (research core funding P1-0297 and projects J1-2452, N1-0285). 

\baselineskip12pt


\begin{thebibliography}{99}

\bibitem{anand-2019} 
B.S.~Anand, U.~Chandran S.V., M.~Changat, S.~Klav\v{z}ar, E.J.~Thomas,
Characterization of general position sets and its applications to cographs and bipartite graphs, 
Appl.\ Math.\ Comput.\ 359 (2019) 84--89.

\bibitem{bidine-2023}
E.-Z.~Bidine, T.~Gadi, K.~Taoufiq M.~Kchikech, 
The exponential growth of the packing chromatic number of iterated {M}ycielskians,
Discrete Appl.\ Math.\ 341 (2023) 232--241. 

\bibitem{boutin-2024}
D.~Boutin, S.~Cockburn, L.~Keough, S.~Loeb, K.E.~Perry, P.~Rombach, 
Determining number and cost of generalized {M}ycielskian graphs,
Discuss.\ Math.\ Graph Theory 44 (2024) 127--149. 

\bibitem{chandran-2016} 
U.~Chandran S.V., G.J.~Parthasarathy, 
The geodesic irredundant sets in graphs, 
Int.\ J.\ Math.\ Combin.\ 4 (2016) 135--143.

\bibitem{cicerone-2023a} 
S.~Cicerone, G.~{Di Stefano}, L.~Dro\v{z}\dj ek, J.~Hed\v{z}et, S.~Klav\v{z}ar, I.G.~Yero,
Variety of mutual-visibility problems in graphs,
Theoret.\ Comput.\ Sci.\ 974 (2023) Paper 114096.

\bibitem{cicerone-2023b} 
S.~Cicerone, G.~{Di Stefano}, S.~Klav\v{z}ar,
On the mutual-visibility in Cartesian products and in triangle-free graphs, 
Appl.\ Math.\ Comput.\ 438 (2023) Paper 127619.

\bibitem{cicerone-2023+}
S.~Cicerone, G.~{Di Stefano}, S.~Klav\v{z}ar, I.G. Yero,
Mutual-visibility in strong products of graphs via total mutual-visibility,
arXiv:2210.07835 [math.CO] (14 Oct 2022).

\bibitem{cicerone-2024+}
S.~Cicerone, G.~{Di Stefano}, S.~Klav\v{z}ar, I.G. Yero,
Mutual-visibility problems on graphs of diameter two, 
arXiv:2401.02373v1 [math.CO] (4 Jan 2024).

\bibitem{distefano-2022}
G.~{Di Stefano},
Mutual visibility in graphs,
Appl.\ Math. Comput.\ 419 (2022) Paper 126850.

\bibitem{dudeney-1917} 
H.E.~Dudeney, 
Amusements in Mathematics, 
Nelson, Edinburgh, 1917.

\bibitem{froese-2017} 
V.~Froese, I.~Kanj, A.~Nichterlein, R.~Niedermeier, 
Finding points in general position, 
Internat.\ J.\ Comput.\ Geom.\ Appl.\ 27 (2017) 277--296.

\bibitem{ghorbani-2021}
M.~Ghorbani, H.R.~Maimani, M.~Momeni, F.R.~Mahid, S.~Klav\v{z}ar, G.~Rus,
The general position problem on Kneser graphs and on some graph operations,
Discuss.\ Math.\ Graph Theory 41 (2021) 1199--1213.

\bibitem{james-2023}
T.~James, A.~Vijayakumar, 
Mycielskian of graphs with small game domination number,
Art Discrete Appl.\ Math.\ 6 (2023) Paper 2.07.

\bibitem{kalarkop-2024}
D.A.~Kalarkop, R.~Rangarajan,
A counterexample on the conjecture and bounds on {$\chi_{gd}$}-number of {M}ycielskian of a graph,
Commun.\ Comb.\ Optim.\ 9 (2024) 197--204. 

\bibitem{klavzar-2021}
S.~Klav\v{z}ar, B.~Patk\'{o}s, G.~Rus, I.G.~Yero,
On general position sets in Cartesian products,
Results Math.\ 76 (2021) Paper 123.

\bibitem{klavzar-2021b}
S.~Klav\v zar, G.~Rus,
The general position number of integer lattices,
Appl.\ Math.\ Comput.\ 390 (2021) Paper 125664.

\bibitem{klavzar-2019} 
S.~Klav\v{z}ar, I.G.~Yero, 
The general position problem and strong resolving graphs, 
Open Math.\ 17 (2019) 1126--1135.

\bibitem{korner-1995}
J.~K\"orner,
On the extremal combinatorics of the Hamming space,
J.\ Comb.\ Theory Ser.\ A 71 (1995) 112--126.

\bibitem{kozorenko-2023}
S.~Kozerenko, A.~Serdiuk, 
New results on imbalance graphic graphs,
Opuscula Math.\ 43 (2023) 81--100.

\bibitem{kuziak-2023+}
D.~Kuziak, J.A.~Rodr\'{\i}guez-Vel\'{a}zquez,
Total mutual-visibility in graphs with emphasis on lexicographic and Cartesian products,
Bull.\ Malay.\ Math.\ Soc.\ 46 (2023) Paper 197.

\bibitem{lou-2022}
Z.~Lou, N.~Obata, Q.~Huang, 
Quadratic embedding constants of graph joins,
Graphs Combin.\ 38 (2022) Paper 161. 

\bibitem{manuel-2018}
P.~Manuel, S.~Klav{\v z}ar,
A general position problem in graph theory,
Bull.\ Aust.\ Math.\ Soc.\ 98 (2018) 177--187.

\bibitem{munarini-2008} 
E.~Munarini, C.P.~Cippo, A.~Scagliola, N.~Zagaglia Salvi, 
Double graphs, 
Discrete Math.\ 308 (2008) 242--254.

\bibitem{mycielski-1955} 
J.~Mycielski, 
Sur le coloriage des graphs,
Colloq.\ Math.\ 3 (1955) 161--162.

\bibitem{thomas-2021}	
E.J.~Thomas, U.~Chandran S.V., 
On independent position sets in graphs,
Proyecciones 40 (2021) 385--398. 

\bibitem{thomas-2024+} 
E.J.~Thomas, U.~Chandran S.V., J.~Tuite, G.~{Di Stefano}, 
On the general position number of Mycielskian graphs, 
arXiv:2203.08170v1 [math.CO] (15 Mar 2022).

\bibitem{tian-2024}
J.~Tian, S.~Klav\v{z}ar,
Graphs with total mutual-visibility number zero and total mutual-visibility in Cartesian products, 
Discuss.\ Math.\ Graph Theory (2024) {url  https://doi.org/10.7151/dmgt.2496}. 

\bibitem{tian-2021a}
J.~Tian, K.~Xu,
The general position number of Cartesian products involving a factor with small diameter,
Appl.\ Math.\ Comput.\ 403 (2021) Paper 126206.

\bibitem{tian-2021b}
J.~Tian, K.~Xu, S.~Klav\v{z}ar,
The general position number of the Cartesian product of two trees,
Bull.\ Aust.\ Math.\ Soc.\ 104 (2021) 1--10.

\end{thebibliography}
\end{document}